\documentclass[english]{article}
\usepackage[T1]{fontenc}
\usepackage[utf8x]{inputenc}
\setcounter{secnumdepth}{3}
\setcounter{tocdepth}{3}
\usepackage{graphicx}
\usepackage{amssymb}
\usepackage{amsmath}
\usepackage{amsthm}
\usepackage{color}
\usepackage{amsfonts}
\usepackage{mathrsfs}
\usepackage{latexsym}
\usepackage{geometry} 
\usepackage{textcomp}
  \usepackage[english]{babel}
\usepackage{tikz}
\usetikzlibrary{matrix,arrows}



\DeclareMathOperator{\Hdg}{Hdg}

\theoremstyle{definition} 
\newtheorem{defi}{Definition}[section]

\theoremstyle{plain} 
\newtheorem*{bigtheo}{Theorem}
\newtheorem{theo}[defi]{Theorem}
\newtheorem{lemm}[defi]{Lemma}  
\newtheorem{prop}[defi]{Proposition}
\newtheorem{hypo}[defi]{Hypothesis}

\theoremstyle{remark} 
\newtheorem{rema}[defi]{Remark}

\begin{document}

\title{Hodge stratification in low dimension}
\author{Stéphane Bijakowski}
\date{}
\maketitle

\begin{abstract}
We define and study the Hodge stratification for the special fiber of Shimura varieties defined with the Pappas-Rapoport condition, in the case of low ramification index ($e \leq 3$). For $e \leq 2$, the Hodge polygon induces a strong stratification. For $e=3$, one needs to introduce several polygons. They describe the isomorphism class of the sheaf of differentials with extra structure, and induce a strong stratification on the variety. 
\end{abstract}

\section*{Introduction}

Let $p$ be an odd prime, and $X$ be the special fiber of the modular curve. Considering the structure of the universal elliptic curve at $p$, one is led to consider two possibilities: either it is ordinary at $p$, or supersingular. The ordinariness can be seen thanks to the number of points of the $p$-torsion, the structure of the $p$-divisible group (which is split, with a multiplicative part, and an étale part), or the Hasse invariant (which is non-zero). One then has a stratification on the variety, with the ordinary locus, and supersingular points. \\
$ $\\
For more general varieties, for example Siegel or Hilbert-Siegel varieties, one can define different stratifications. Assume that the prime $p$ is unramified in the datum, and consider the special fiber of such a variety. One can then look at the $p$-rank of the abelian scheme, which gives a stratification indexed by an integer. A finer stratification is given by the isomorphism class of the $p$-torsion of the abelian scheme, the Ekedahl-Oort stratification (see \cite{Oo}). Another possibility is to consider the associated $p$-divisible group, up to isogeny, which gives the Newton stratification (see \cite{VW} for example). \\
$ $\\
The geometry of these varieties have been extensively studied in the unramified case. However, few results are known when the prime $p$ ramifies. One is led to consider Shimura varieties defined by Pappas and Rapoport (\cite{P-R}, \cite{P-R2}), whose integral models are smooth (\cite{BH_PR}). In \cite{BH_ram}, several polygons are defined: the Newton polygon, the Hodge polygon, and the PR polygon (which is constant on the variety). Contrary to the unramified case, the Hodge polygon can vary on the variety, and one can try to use this polygon to define a stratification. \\
$ $\\
Let us consider the Hilbert-Siegel variety $X_0$ associated to a totally real field $F_0$ (we also consider unitary Shimura varieties), defined with the Pappas-Rapoport models, and $X$ its special fiber. Let $p$ be a prime, and assume for simplicity that $p$ is totally ramified in $F_0$, with degree $e$ and uniformizer $\pi$. If $\omega$ denotes the sheaf of differentials, one has a filtration
$$0 \subseteq \omega_1 \subseteq \dots \subseteq \omega_e = \omega$$

If $x \in X(k)$, the Hodge polygon at $x$ describes the structure of $\omega$ as a $k[T]/T^e$ (with $T$ acting by $\pi$). In general, the Hodge polygon does not induce a strong stratification. One has the following theorem.

\begin{bigtheo}
If $e \leq 2$, the Hodge polygon induces a strong stratification on the variety. Assume that $e=3$; the isomorphism class of $\omega$ over a point in $X$ is described by the three polygons $\Hdg(\omega), \Hdg(\omega_2), \Hdg(\omega / \omega_1)$. Moreover, these three polygons define a strong stratification on $X$.
\end{bigtheo}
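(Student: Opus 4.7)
The plan is to reduce the statement to a pointwise algebraic classification of filtered $k[T]/T^e$-modules, and then to derive the global stratification from the upper semi-continuity of the relevant Fitting ideals.

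For $e=1$ there is nothing to prove since $\omega$ is a vector bundle of constant rank. For $e=2$, the module $\omega$ decomposes as a sum of copies of $k[T]/T^2$ and $k$ whose multiplicities can be read off from $\Hdg(\omega)$; the subspace $\omega_1$ is then an arbitrary subspace of $\ker T$ containing $T\omega$ of prescribed dimension, and all such choices are interchanged by an automorphism of $\omega$. Hence the Hodge polygon alone determines the filtered module $(\omega,\omega_1)$ up to isomorphism.

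The heart of the proof is the case $e=3$. The plan here is to enumerate the indecomposable filtered modules $(M,M_1,M_2)$ subject to the constraints $TM\subseteq M_2$, $TM_2\subseteq M_1$, $TM_1=0$. A direct case analysis on $\dim M$ should produce exactly seven types: three of dimension $1$ (distinguished by the smallest step of the filtration containing $M$), three of dimension $2$ built on $M\cong k[T]/T^2$ (distinguished by the positions of $M_1$ and $M_2$), and a single indecomposable of dimension $3$ with $M\cong k[T]/T^3$ and its canonical filtration. Any filtered module then decomposes as a direct sum of these indecomposables with multiplicities $n_1,\dots,n_7$. Since $\Hdg(\omega)$, $\Hdg(\omega_2)$, and $\Hdg(\omega/\omega_1)$ are additive in direct sums, computing them on each indecomposable yields a linear system expressing the triple of polygons in terms of the $n_i$; verifying invertibility of this system is the main combinatorial content of the theorem. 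I expect this linear-independence step, together with the case analysis showing that every filtered module outside the list of seven admits a filtration-compatible splitting, to be the principal obstacle.

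For the strong stratification, each polygon is computed from Fitting ideals of a coherent sheaf ($\omega$, $\omega_2$, or $\omega/\omega_1$), and is therefore upper semi-continuous on $X$. The locus where the triple of polygons takes a prescribed value is thus locally closed, and along any specialization each polygon can only move upward. Combined with the pointwise classification, this yields the expected closure relations, and hence a strong stratification. The existence of specializations realizing every allowed jump can be checked on the local models of the Pappas-Rapoport construction, which are smooth and sufficiently explicit.
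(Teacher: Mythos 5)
Your reduction of the pointwise classification to a Krull--Schmidt decomposition is a legitimate alternative to what the paper does, and the bookkeeping does work out: with the seven indecomposables you describe (three of dimension $1$ indexed by the first step of the filtration containing the generator, three supported on $k[T]/T^2$, one on $k[T]/T^3$), the multiplicities $n_1,\dots,n_7$ are recovered from the slope-$1$ multiplicities of $\Hdg(\omega)$, $\Hdg(\omega_2)$, $\Hdg(\omega/\omega_1)$ together with the fixed ranks $(d_1,d_2,d_3)$, so the linear system is indeed invertible. The paper instead proves the same statement by exhibiting an explicit adapted basis ($M_1$ and $M_2$ spanned by prescribed monomials $T^je_i$ after a change of basis), which is logically equivalent to your decomposition; your phrasing is arguably cleaner. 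But you have only asserted, not proved, the completeness of your list of indecomposables --- that every filtered module splits off one of the seven --- and you correctly identify this as the main combinatorial content. Note also that the paper's classification carries an extra admissibility inequality ($\delta_1+\max(d_2,\delta_2)\le\alpha_1+\beta_1$) cutting out which triples of polygons actually occur; in your language this is the nonnegativity of the $n_i$ solved from the linear system, and it matters for the stratification, since the index set of nonempty strata is $Y^{adm}$ (resp.\ $Y^{pol}$), not all of $Y$.

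The serious gap is the stratification itself. Semicontinuity of the three polygons gives only the inclusion $\overline{X_y}\subseteq\coprod_{y'\ge y}X_{y'}$; the content of ``strong stratification'' is the reverse inclusion, i.e.\ that every point of $X_{y'}$ with $y'\ge y$ lies in the closure of $X_y$. Your proposal disposes of this in one sentence (``can be checked on the local models''), but this is where essentially all of Sections 4--5 of the paper live: one must, for each $k$-point $x$ of invariant $y'$, build a deformation over $k[[X]]$ whose generic fibre has invariant exactly $y$, via Serre--Tate and Grothendieck--Messing, by lifting the filtration $\omega_1\subseteq\omega_2\subseteq\omega\subseteq\mathcal{E}$ step by step. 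The three polygons cannot be deformed independently: the paper has to track auxiliary intersection numbers (the spaces $\mathcal{F}$ and $\mathcal{G}$ built from the lift of $\omega_2$) so that all three polygons drop to their targets simultaneously, and the inequalities making this possible are exactly those defining $Y^{adm}$ and the order on it. This requires a nontrivial lemma on lifting a subspace of a filtered free $k[[X]]$-module with prescribed generic intersection dimensions, and, in the Hilbert--Siegel case, a polarized variant ensuring the lift stays totally isotropic. None of this is addressed by appealing to smoothness of the local model, so as it stands the closure relation --- the actual theorem --- is unproven in your proposal.
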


If $M$ is a $k[T]/T^i$-module (for a field $k$ and integer $i$), then $\Hdg(M)$ is the polygon describing its structure as a $k[T]/T^i$-module. By a strong stratification, we mean that the closure of a stratum is equal to a union of other strata. \\
We also give an explicit description of the possible values of the three polygons. \\
Note that one can make a link with Spaltenstein varieties (\cite{Sp}): in these varieties, one fix the structure of $\omega$, and consider the different possibilities for the filtration. The result presented here is then more general, since we allow the structure of $\omega$ to vary. \\
$ $\\
Let us now talk about the difficulties when $e \geq 4$. First of all, it is not true in general that the isomorphism class of the filtration $(\omega_i)$ gives a finite number of strata. Indeed, one can consider the case where the multiplication by $\pi^k$ is an isomorphism between $\omega_{2(k+1)} / \omega_{2k}$ and $\omega_2$. The space $\omega_{2k+1}$ is then determined by a subspace inside $\omega_2$. But classifying a large number of subspaces inside a fixed vector space may give an infinite number of possibilities. One can also look at the appendix of \cite{A-G} for an example of an infinite number of isomorphism classes. \\
$ $\\
One could try to generalize the above theorem, considering in the general case all the polygons $\Hdg( \omega_i / \omega_j)$. The issue is that these polygons only give information about the dimension of spaces of the form $\pi^{-k} \omega_j \cap \omega_i$. When $e \geq 4$, the space $\omega_4 \cap \pi^{-1} \omega_2 \cap \pi^{-2} \omega_0$ is not of this form, and its dimension cannot be deduced from the previous polygons. \\
$ $\\
Let us now talk about the organization of the article. In the first section, we introduce the set of polygons that we will consider. In section $2$, we show how these polygons can be applied to the study of $k[T]/T^e$-modules, and we specialize to the case $e=3$ in section $3$. In section $4$, we prove the result concerning the stratification of Shimura varieties. \\
$ $\\
The author is part of the project ANR-19-CE40-0015 COLOSS.

\section{Polygons}

\subsection{Definition}

Let $h \geq 1$ be an integer. Les $N \geq 1$, and $d_1, \dots, d_N$ be integers between $0$ and $h$.

\begin{defi}
We define the polygon $P(d_1, \dots, d_N)$ by the formula
$$P(d_1, \dots, d_N) (x) = \frac{1}{N} \sum_{i=1}^N \max(0, x + d_i - h)$$
for all real $x$ between $0$ et $h$.
\end{defi} 

This polygon is convex, its breakpoints have $x$-coordinates in $\mathbb{Z}$. The order of the integers $d_i$ is irrelevant : one has $P(d_1, \dots, d_N) = P(d_{\sigma(1)}, \dots, d_{\sigma(N)})$ for every permutation $\sigma$. \\
They are useful to describe a certain type of polygons.

\begin{defi}
Let $N \geq 1$ be an integer; let us define $\mathcal{P}_N$ to be the set of convex polygons between $0$ and $h$, whose breakpoints have integral $x$-coordinates and whose slopes are in $\frac{1}{N} \mathbb{Z} \cap [0,1]$.
\end{defi}

\begin{prop}
Let $N \geq 1$. The polygons $P(d_1, \dots, d_N)$ are in $\mathcal{P}_N$. \\
Let $P \in \mathcal{P}_N$; there exist a unique collection of integers $d_1, \dots, d_N$ (up to a permutation) such that $P = P(d_1, \dots, d_N)$. 
\end{prop}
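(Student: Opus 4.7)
The plan is to treat the two assertions separately, relying in both cases on the explicit slope formula for $P(d_1,\dots,d_N)$. For the first assertion, I would observe that each summand $\max(0, x+d_i-h)$ is convex piecewise linear with a unique breakpoint at the integer $h-d_i \in [0,h]$ and slopes $0$ then $1$. Their normalized sum is therefore convex on $[0,h]$ with breakpoints at integer $x$-coordinates, and on any open subinterval between consecutive breakpoints its slope equals $\frac{1}{N}\#\{i : d_i > h-x\} \in \frac{1}{N}\mathbb{Z}\cap[0,1]$; moreover the polygon starts at $(0,0)$ since $d_i \leq h$ forces $\max(0,d_i-h)=0$ at $x=0$.

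For the converse I would invert this recipe. Given $P \in \mathcal{P}_N$, convexity together with the constraint that breakpoints lie at integers gives well-defined slopes $s_j = k_j/N$ on each unit interval $(j,j+1)$ for $j \in \{0,\dots,h-1\}$, with $k_j \in \{0,\dots,N\}$ non-decreasing in $j$. Setting $n_k := k_{h-k}$ for $k \in \{1,\dots,h\}$ produces a non-increasing sequence in $\{0,\dots,N\}$, and I would then define the multiset of $d_i$'s by prescribing $\#\{i : d_i \geq k\} = n_k$ for every $k$, concretely by taking $n_k - n_{k+1}$ copies of $k$ for $k \geq 1$ (with $n_{h+1}:=0$) together with $N - n_1$ copies of $0$. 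By the slope formula of the first step, $P(d_1,\dots,d_N)$ then has slope $n_{h-j}/N = s_j$ on each $(j,j+1)$, matching $P$; since both polygons pass through the origin they agree on $[0,h]$, and uniqueness of the multiset up to permutation follows from the same inversion formula $\#\{i : d_i \geq k\} = N\cdot s_{h-k}$.

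Nothing in this plan is genuinely difficult; the only care needed is to interpret $\mathcal{P}_N$ with the implicit convention $P(0)=0$ (which is forced on polygons of the form $P(d_1,\dots,d_N)$ because $d_i \leq h$), and to identify slopes consistently on the correct open unit subintervals between integer breakpoints.
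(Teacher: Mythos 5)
Your proposal is correct and follows essentially the same route as the paper: both arguments identify the polygon by reading off its slopes (you record the slope $s_j$ on each unit interval $(j,j+1)$ and invert via $\#\{i : d_i \geq k\} = N s_{h-k}$, while the paper records the multiplicity $d_i - d_{i+1}$ of each slope $i/N$ and inverts via $d_i = h - (a_0+\dots+a_{i-1})$ — two equivalent, conjugate descriptions of the same data). Your explicit remark that the convention $P(0)=0$ is implicit in the definition of $\mathcal{P}_N$ is a fair point of care that the paper leaves tacit.
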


\begin{proof}
Let us consider the polygon $P(d_1, \dots, d_N)$. One can assume that the integers are ordered in such a way that $d_1 \geq \dots \geq  d_N$. The slopes of this polygon are obviously in $\frac{1}{N} \mathbb{Z} \cap [0,1]$. More precisely, this polygon has slope $i/N$ with multiplicity $d_i - d_{i+1} $, for every $0 \leq i \leq N$ (with $d_0 =h$ and $d_{N+1} = 0$ by convention). \\
Now let $P \in \mathcal{P}_N$. It has slope $i/N$ with multiplicity $a_i$, for every $0 \leq i \leq N$. One must have $a_0 + \dots + a_N = h$. One thus sees that there exists a unique collection of integers $d_1 \geq \dots \geq d_N$ such that $P = P(d_1, \dots, d_N)$ given by the formula 
$$d_i = h - (a_0 + \dots + a_{i-1})$$
for $1 \leq i \leq N$.

\end{proof}

\begin{rema}
If $N,k$ are integers, there is a natural inclusion $\mathcal{P}_N \subseteq \mathcal{P}_{kN}$. If $P$ is the polygon $P(d_1, \dots, d_N)$, this operation consists in writing each integer $d_i$ with multiplicity $k$ (i.e. $P=P(d_1, \dots, d_1, d_2, \dots, d_2, \dots, d_N, \dots, d_N)$).
\end{rema}

This description allows us to define the following operation.

\begin{defi}
Let $N_1, N_2$ be two integers. To $P_1 \in \mathcal{P}_{N_1}$ and $P_2 \in \mathcal{P}_{N_2}$, one can attach the polygon $P_1 \star P_2 \in \mathcal{P}_{N_1 + N_2}$ by the following formula
$$P_1 \star P_2 (x) = \frac{1}{N_1 + N_2} (N_1 P_1(x) + N_2 P_2(x))$$
\end{defi}

Note that if $P_1 = P(d_1, \dots, d_{N_1})$ and $P_2 = P(d_1', \dots, d_{N_2}')$, then
$$P_1 \star P_2 := P(d_1, \dots, d_{N_1}, d_1', \dots, d_{N_2}')$$ 

If $P_1$ and $P_2$ are two polygons, we say that $P_1$ lies above $P_2$, and write $P_1 \geq P_2$ if $P_1 (x) \geq P_2(x)$ for all real $0 \leq x \leq h$.

\begin{prop}
Let $d_1 \geq \dots \geq d_N$ and $d_1' \geq \dots \geq d_N'$. Then $P(d_1, \dots, d_N) \geq P(d_1', \dots, d_N')$ if and only if
$$d_1'+ \dots + d_i' \leq d_1 + \dots + d_i$$
for all $1 \leq i \leq N$.
\end{prop}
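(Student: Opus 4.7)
The plan is to reduce the pointwise inequality between polygons to a comparison of maxima over partial sums. Setting $t = h - x$ and
$$f(t) := \sum_{i=1}^N \max(0, d_i - t), \qquad f'(t) := \sum_{i=1}^N \max(0, d_i' - t),$$
the defining formula gives $P(d_1,\dots,d_N)(x) = f(t)/N$ and $P(d_1',\dots,d_N')(x) = f'(t)/N$. Hence the desired inequality on $[0,h]$ is equivalent to $f \geq f'$ on $[0,h]$.

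The central step is the identity
$$f(t) = \max_{0 \leq k \leq N} (S_k - k t), \qquad S_k := d_1 + \dots + d_k, \quad S_0 := 0,$$
and similarly for $f'$ with $S_k'$. The lower bound $f(t) \geq S_k - kt$ for each individual $k$ follows by rewriting
$$f(t) - (S_k - k t) = \sum_{i \leq k} \max(0, t - d_i) + \sum_{i > k} \max(0, d_i - t),$$
which is a sum of non-negative terms. Equality is realised by taking $k$ to be the largest index with $d_k \geq t$ (and $k = 0$ if no such index exists): the monotonicity $d_1 \geq \dots \geq d_N$ then forces both sums above to vanish.

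Once this identity is in hand, both directions drop out quickly. For the converse ($\Leftarrow$), the hypothesis $S_k \geq S_k'$ gives $S_k - kt \geq S_k' - kt$ for every $k$, so taking maxima preserves the inequality and yields $f(t) \geq f'(t)$ for all $t$. For the direct implication ($\Rightarrow$), I specialize to $t = d_k$: the identity produces $f(d_k) = S_k - k d_k$ exactly, while the lower bound applied to $f'$ with the single term $j = k$ gives $f'(d_k) \geq S_k' - k d_k$; combining with $f(d_k) \geq f'(d_k)$ forces $S_k \geq S_k'$.

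There is no substantial obstacle in this argument. The only point meriting attention is the effect of ties among the $d_i$, which could in principle complicate the identification of the $k$ realising the maximum at $t = d_k$; but the max-formulation absorbs ties automatically, since whenever $d_k = d_{k+1}$ the candidate values $S_k - k d_k$ and $S_{k+1} - (k+1) d_k$ agree.
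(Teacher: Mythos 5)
Your proof is correct and follows essentially the same route as the paper: both arguments come down to the two inequalities $f(t)\geq S_k-kt$ (with equality at $t=d_k$) and the corresponding bound for $f'$, evaluated at the breakpoints $t=d_k$ resp.\ $t=d_k'$. Your identity $f(t)=\max_{0\leq k\leq N}(S_k-kt)$ is a clean repackaging of the convexity-plus-breakpoints argument the paper uses for the converse direction, and your remark about ties correctly disposes of the only delicate point.
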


\begin{proof}
Let $P_1 = P(d_1, \dots, d_N)$ and $P_2 = P(d_1', \dots, d_N')$ and assume that $P_1 \geq P_2$, and let $1 \leq i \leq N$ be an integer. Then $N P_1 (h-d_i) = d_1 + \dots + d_{i-1} - (i-1) d_i$. Moreover
$$N P_2 (h-d_i) = \sum_{j=1}^N \max(0, d_j'-d_i) \geq \sum_{j=1}^i (d_j' - d_i) = d_1' + \dots + d_i' - i d_i  $$
The condition $P_1 (h-d_i) \geq P_2 (h-d_i)$ thus implies that $d_1 + \dots + d_i \geq d_1' + \dots + d_i'$. \\
Conversely assume that $d_1'+ \dots + d_i' \leq d_1 + \dots + d_i$ for all $1 \leq i \leq N$. Since the polygons are both convex, and the breakpoints of $P_2$ have $x$-coordinates $h-d_i'$, it is enough to prove that $P_1 (h-d_i')\geq P_2 (h-d_i')$ for $1 \leq i \leq N+1$ (with $d_{N+1}' = 0$). Let $1 \leq i \leq N+1$; one has $N P_2(h-d_i') = d_1' + \dots + d_{i-1}' - (i-1) d_{i}'$. Then
$$N P_1 (h-d_i') = \sum_{j=1}^N \max(0, d_j - d_i') \geq \sum_{j=1}^i (d_j - d_i') = (d_1 + \dots + d_i) - i d_i' \geq (d_1' + \dots + d_i') - i d_i'$$
which yields the result.
\end{proof}

\section{PR datum}

Let $k$ be a field, $e \geq 1$ an integer, and $M$ be a finite vector space over $k$ with a $k[T] /T^e$-action. Assume that $M$ is generated by at most $h$ elements as a $k[T]/T^e$-module. One can then write

$$ M \simeq \oplus_{i=1}^h k[T] / T^{a_i}$$
for some integers $a_1, \dots, a_h$ between $0$ and $e$.

\begin{defi}
We define the Hodge polygon $\Hdg(M)$ of $M$ as the polygon with slopes $\frac{a_1}{e}, \dots, \frac{a_h}{e}$.
\end{defi}

One can look at \cite{BH_ram} section 1 for more details about the Hodge polygon. \\
This polygon belongs to $\mathcal{P}_e$. Let $\delta_i := \dim M[T^i] / M[T^{i-1}]$ for $1 \leq i \leq e$. Since the multplication by $T$ induces an injection from $M[T^{i+1}] / M[T^i]$ to $M[T^i] / M[T^{i-1}]$, one has the inequalities $\delta_1 \geq \delta_2 \geq \dots \geq \delta_e$.

\begin{prop}
One has $\Hdg(M) = P(\delta_1, \dots, \delta_e)$.
\end{prop}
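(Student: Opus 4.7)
The plan is to exploit the decomposition $M \simeq \bigoplus_{i=1}^h k[T]/T^{a_i}$ to compute the dimensions $\delta_j$ explicitly in terms of the $a_i$, and then to match slope multiplicities using the analysis already carried out in the proof of Proposition 1.3.

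First I would compute $\delta_j$ summand by summand. For a single cyclic piece $k[T]/T^a$, multiplication by $T^j$ has kernel $T^{\max(a-j,0)} \cdot k[T]/T^a$, which has dimension $\min(a,j)$. Hence $\dim (k[T]/T^a)[T^j] - \dim (k[T]/T^a)[T^{j-1}]$ equals $1$ if $a \geq j$ and $0$ otherwise. Summing over the $h$ summands gives
$$\delta_j = \#\{\, i : a_i \geq j\,\}$$
for every $1 \leq j \leq e$. In particular this directly re-proves the inequalities $\delta_1 \geq \delta_2 \geq \dots \geq \delta_e$ already observed before the statement.

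Next I would reorder so that $a_1 \geq a_2 \geq \dots \geq a_h$, which is harmless since $P(d_1,\dots,d_N)$ is permutation-invariant. By definition, $\Hdg(M)$ is the polygon whose slopes are $a_1/e, \dots, a_h/e$, so slope $j/e$ (for $0 \leq j \leq e$) appears in $\Hdg(M)$ with multiplicity
$$\#\{\, i : a_i = j\,\} = \#\{\, i : a_i \geq j\,\} - \#\{\, i : a_i \geq j+1\,\} = \delta_j - \delta_{j+1},$$
with the conventions $\delta_0 = h$ (clear, since $M[T^0]=0$ and $M[T^e]=M$ of dimension $a_1+\dots+a_h$ — but what we actually need is $\#\{i : a_i \geq 0\}=h$) and $\delta_{e+1} = 0$.

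Finally I would invoke the computation made in the proof of Proposition 1.3: for the ordered tuple $\delta_1 \geq \dots \geq \delta_e$, the polygon $P(\delta_1, \dots, \delta_e) \in \mathcal{P}_e$ has slope $j/e$ with multiplicity exactly $\delta_j - \delta_{j+1}$ (with the same boundary conventions). Since the two polygons are convex, lie between the same endpoints, and have identical slope multiplicities at each value $j/e$, they coincide. There is no real obstacle here; the only thing to watch is handling the boundary cases $j=0$ and $j=e$ correctly, which the convention $\delta_0 = h$, $\delta_{e+1}=0$ takes care of.
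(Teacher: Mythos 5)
Your proof is correct and follows the same route as the paper: both identify $\delta_j - \delta_{j+1}$ with the number of indices $i$ such that $a_i = j$ and then match this against the slope-multiplicity description of $P(\delta_1,\dots,\delta_e)$ from the proof of the first proposition. You merely fill in the summand-by-summand computation of $\delta_j = \#\{i : a_i \geq j\}$ that the paper leaves implicit.
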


\begin{proof}
From the definition, one finds that the quantity $d_i - d_{i+1}$ is equal to the number of integers $a_j$ equal to $i$. The polygon $\Hdg(M)$ has thus slope $i/e$ with multiplicity $d_i - d_{i+1}$, and is equal to $P(d_1, \dots, d_e)$.
\end{proof}

\begin{defi}
Let $\mu= (d_1, \dots, d_e)$. A PR datum of type $\mu$ for $M$ is a filtration 
$$0=M_0 \subseteq M_1 \subseteq \dots \subseteq M_e = M$$
such that
\begin{itemize}
\item $M_i$ is a vector subspace of $M$.
\item $ T \cdot M_i \subseteq M_{i-1}$ for $1 \leq i \leq e$.
\item The dimension of $M_i / M_{i-1}$ is equal to $d_i$ for $1 \leq i \leq e$.
\end{itemize}
\end{defi}

The terminology comes from the definition by Pappas and Rapoport of the special fiber of Shimura varieties (see \cite{P-R}).

\begin{prop}
Let $\mu= (d_1, \dots, d_e)$, $\sigma$ a permutation and $\mu' = (d_{\sigma(1)}, \dots, d_{\sigma(e)})$. Then there exists a PR datum of type $\mu$ for $M$ if and only if there exists a PR datum of type $\mu'$ for $M$.
\end{prop}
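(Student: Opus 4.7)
The plan is to reduce the proposition to the case where $\sigma$ is an adjacent transposition $(i,i+1)$, since any permutation of $\{1,\dots,e\}$ is a product of adjacent transpositions. So, starting from a PR datum $(M_j)_{0\leq j \leq e}$ of type $(d_1,\dots,d_e)$, I aim to construct a replacement $M_i'$ for $M_i$ (keeping all other $M_j$) so that the resulting filtration is a PR datum of type $(d_1,\dots,d_{i-1},d_{i+1},d_i,d_{i+2},\dots,d_e)$. Unwinding the definition, the conditions $M_i'$ must satisfy are
\[
M_{i-1} \subseteq M_i' \subseteq M_{i+1}, \quad T M_i' \subseteq M_{i-1}, \quad T M_{i+1} \subseteq M_i', \quad \dim(M_i'/M_{i-1}) = d_{i+1}.
\]

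The natural place to search for $M_i'$ is the quotient $V := M_{i+1}/M_{i-1}$. Since $T M_{i-1} \subseteq M_{i-2} \subseteq M_{i-1}$, multiplication by $T$ descends to an endomorphism $\bar T \colon V \to V$. The original PR axioms $T M_{i+1} \subseteq M_i$ and $T M_i \subseteq M_{i-1}$ translate respectively into $\bar T(V) \subseteq \bar M_i$ and $\bar T(\bar M_i) = 0$, where $\bar M_i := M_i/M_{i-1}$ has dimension $d_i$; in particular $\bar T^2 = 0$. The conditions on $M_i'$ now reduce to finding a subspace $W \subseteq V$ of dimension $d_{i+1}$ with $\bar T(V) \subseteq W \subseteq \ker \bar T$, after which $M_i'$ is recovered as the preimage of $W$ in $M_{i+1}$.

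Such a $W$ exists if and only if $\dim \bar T(V) \leq d_{i+1} \leq \dim \ker \bar T$. The upper bound holds because $\bar T$ vanishes on $\bar M_i$ and so factors through $V/\bar M_i$, which has dimension $d_{i+1}$; hence $\dim \bar T(V) \leq d_{i+1}$. The lower bound comes from $\dim \ker \bar T = \dim V - \dim \bar T(V) \geq (d_i + d_{i+1}) - d_i = d_{i+1}$, where the last inequality uses $\bar T(V) \subseteq \bar M_i$ once more. So $W$ exists, which concludes the adjacent swap and hence the proposition.

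The main step is the passage to the quotient $V$, which converts the question into a clean linear-algebra problem about a nilpotent operator of square zero; the two sides $\bar T(V) \subseteq \bar M_i$ and $\bar M_i \subseteq \ker \bar T$ supplied by the original PR datum give precisely the two dimension inequalities needed to fit $d_{i+1}$ into the admissible interval $[\dim \bar T(V), \dim \ker \bar T]$. Everything else is bookkeeping with inclusions and dimensions.
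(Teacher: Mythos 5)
Your proof is correct and follows essentially the same route as the paper: reduce to an adjacent transposition, pass to the quotient $N = M_{i+1}/M_{i-1}$ with its square-zero operator induced by $T$, and sandwich a subspace of dimension $d_{i+1}$ between the image and the kernel, using $T\cdot N \subseteq M_i/M_{i-1} \subseteq N[T]$ to get the two needed dimension bounds. No gaps.
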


\begin{proof}
It is enough to prove the result when $\sigma$ is a transposition exchanging two consecutive integers, say $i$ and $i+1$. Assume that $M_0 \subseteq \dots \subseteq M_e$ is a PR datum of type $\mu$ for $M$. Let $N := M_{i+1} / M_{i-1}$, and $N_0 := M_i / M_{i-1}$. Then $N$ is a vector space of dimension $d_i + d_{i+1}$, with an action of $k[T] / T^2$. The vector subspace $N_0$ has dimension $d_i$, and one has
$$T \cdot N \subseteq N_0 \subseteq N[T]$$
Let $r$ be the dimension of $T \cdot N$; the dimension of $N[T]$ is then $d_i + d_{i+1} -r$, and one finds that $r \leq \min(d_i, d_{i+1})$. This means that it is possible to find a vector space $N_1$ of dimension $d_{i+1}$ such that
$$T \cdot N \subseteq N_1 \subseteq N[T]$$
The space $N_1$ gives a PR datum of type $\mu'$ for $M$.
\end{proof}

\begin{theo} \label{HdgPR}
Let $\mu= (d_1, \dots, d_e)$. There exists a PR datum of type $\mu$ if and only if
$$\Hdg(M) \geq P(d_1, \dots, d_e)$$
\end{theo}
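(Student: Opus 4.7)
After invoking the preceding proposition to permute the $d_i$, we may assume $d_1 \geq \dots \geq d_e$; the polygon-comparison proposition then rephrases the hypothesis $\Hdg(M) \geq P(d_1,\dots,d_e)$ as the partial-sum inequalities
\[
d_1 + \dots + d_i \;\leq\; \delta_1 + \dots + \delta_i = \dim M[T^i] \qquad (1 \leq i \leq e).
\]
Necessity of this condition is immediate: iterating $T M_j \subseteq M_{j-1}$ yields $T^i M_i \subseteq M_0 = 0$, hence $M_i \subseteq M[T^i]$, so $d_1 + \dots + d_i = \dim M_i \leq \dim M[T^i]$.

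For sufficiency I would proceed by induction on $e$, the case $e=1$ being trivial. For $e \geq 2$ the plan is to produce a subspace $M_1 \subseteq M[T]$ of dimension $d_1$ containing $T^{e-1}M$; the quotient $\bar M := M/M_1$ is then naturally a $k[T]/T^{e-1}$-module, and any PR datum $\bar M_0 \subseteq \dots \subseteq \bar M_{e-1}$ of type $(d_2,\dots,d_e)$ on $\bar M$ lifts, via $M_i := \pi^{-1}(\bar M_{i-1})$ (where $\pi \colon M \to \bar M$ is the projection), to a PR datum of type $(d_1,\dots,d_e)$ on $M$; the required dimension counts and $T$-stability conditions are immediate from the construction. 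It therefore suffices, invoking the induction hypothesis, to choose $M_1$ so that $\Hdg(\bar M) \geq P(d_2,\dots,d_e)$.

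The heart of the argument is this construction of $M_1$. Consider the decreasing chain $U_r := T^{r-1}M \cap M[T]$ inside $M[T]$, of respective dimensions $\delta_r$. A direct computation of the integers $\bar \delta_r = \dim \bar M[T^r]/\bar M[T^{r-1}]$ in terms of $\dim(M_1 \cap U_r)$ reformulates the desired inequality $\Hdg(\bar M) \geq P(d_2,\dots,d_e)$ as the system of lower bounds
\[
\dim(M_1 \cap U_{i+1}) \;\geq\; c_{i+1} := (d_1 + \dots + d_{i+1}) - (\delta_1 + \dots + \delta_i), \qquad 0 \leq i \leq e-1.
\]
Using the partial-sum hypothesis together with $d_r \leq d_1$, one verifies that $c_r \leq \min(d_1, \delta_r)$ for every $r$. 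One can then construct $M_1$ by descending induction, choosing $W_r \subseteq U_r$ of dimension $\max(c_r, \dim W_{r+1})$ with $W_r \supseteq W_{r+1}$, starting from $W_e := U_e = T^{e-1}M$, and setting $M_1 := W_1$. The main technical obstacle to anticipate is precisely this simultaneous realisability of the lower bounds $c_r$, and it is here that the full strength of the polygon inequality $\Hdg(M) \geq P(d_1,\dots,d_e)$ enters the argument.
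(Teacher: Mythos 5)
Your proof is correct, and its architecture genuinely differs from the paper's. Both arguments start the same way in spirit: the paper also first chooses $M_1 \subseteq M[T]$ of dimension $d_1$ with large intersections with the submodules $T^jM$ (it takes the maximal value $\min(d_1,\delta_{j+1})$, which dominates your thresholds $c_{j+1}$). But the paper then stays inside $M$ and builds the whole chain $M_1 \subseteq M_2 \subseteq \dots \subseteq M_e$ in one induction, carrying the explicit formulas $\alpha_i^j = \dim(M_i \cap T^jM) = \min\bigl(d_1+\dots+d_i,\ \delta_{j+1}+d_2+\dots+d_i,\ \dots,\ \delta_{j+1}+\dots+\delta_{j+i}\bigr)$ and checking $T^{e-i}M \subseteq M_i$ at each stage so that in the end $TM \subseteq M_{e-1}$. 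You instead induct on $e$: you build only $M_1$, translate $\Hdg(M/M_1) \geq P(d_2,\dots,d_e)$ into the lower bounds $\dim(M_1 \cap T^iM) \geq c_{i+1}$ (your reformulation is right, since $\dim (M/M_1)[T^i] = \delta_1+\dots+\delta_i + \dim(M_1\cap T^iM) - d_1$), realize those bounds with the nested spaces $W_r$, and pull a PR datum of $M/M_1$ back by preimages. This yields a cleaner induction that avoids the bookkeeping with the $\alpha_i^j$; the paper's explicit construction, in exchange, records the maximal possible dimensions of all intermediate intersections. Two small points to make explicit if you write this up: the statement implicitly assumes $d_1+\dots+d_e = \dim M$ (the paper invokes this relation too), and it is this that gives $\delta_e \leq d_e \leq d_1$, which you need so that $\dim W_2 \leq d_1$ and hence $\dim W_1$ is exactly $d_1$; also note that your inequality for $i=e-1$ reads $\dim(M_1\cap T^{e-1}M) \geq c_e = \delta_e$, i.e. it already encodes the requirement $T^{e-1}M \subseteq M_1$ that makes $M/M_1$ a $k[T]/T^{e-1}$-module.
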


\begin{proof}
From the previous proposition, one can assume that $d_1 \geq \dots \geq d_e$. Assume that there exists a PR datum of type $\mu$ written $M_0 \subseteq \dots M_e$. Since $M_i \subseteq M[T^i]$, one must have $d_1 + \dots + d_i \leq \delta_1 + \dots + \delta_i$, hence the result. \\
Now assume that $\Hdg(M) \geq P(d_1, \dots, d_e)$. Let us write $\Hdg(M) = P(\delta_1, \dots, \delta_e)$ with $\delta_1 \geq \dots \geq \delta_e$. We see in particular that $\dim M[T^i] = \delta_1 + \dots + \delta_i$ and $\dim T^i M = \delta_{i+1} + \dots + \delta_e$. The fact that $\Hdg(M) \geq P(d_1, \dots, d_e)$ implies the inequalities
$$d_1 + \dots + d_i \leq \delta_1 + \dots + \delta_i \qquad \delta_e + \dots + \delta_{e-i} \leq d_e + \dots + d_{e-i}$$
The second inequality follows from the relation $d_1 + \dots + d_e = \delta_1 + \dots + \delta_e$. \\
We construt the PR datum of type $\mu$ in the following way. We define $M_1$ of dimension $d_1$ inside $M[T]$ such that the dimension of $M_1 \cap T^j M$ is maximal. Note that this is possible since the dimension of $M[T]$ is equal to $\delta_1 \geq d_1$. One then obtains that the dimension of $T^j M \cap M_1$ is equal to $\alpha_1^j := \min (\delta_{j+1}, d_1)$. Since $\delta_e \leq d_e \leq d_1$, one has $\alpha_1^{e-1} = \delta_e$, and $M_1$ contains $T^{e-1} M$. \\
One then considers the vector space $T^{-1} M_1$; it has dimension 
$$\delta_1 + \alpha_1^2 = \min (\delta_1 + \delta_2, \delta_1 + d_1)$$
One defines $M_2$ as a vector space containing $M_1$ inside $T^{-1} M_1$. This is allowed, since $M_2$ must have dimension $d_1 + d_2$, and that $d_1 + d_2 \leq  \min (\delta_1 + \delta_2, \delta_1 + d_1)$ (since $d_2 \leq d_1 \leq \delta_1$, one has indeed $d_1 + d_2 \leq d_1 + \delta_1)$. One constructs $M_2$ in such a way that the dimension of $M_2 \cap T^j M$ is maximal. Since the dimension of $T^j M \cap T^{-1} M_1$ is equal to $\alpha_1^{j+1} + \delta_{j+1}$, the dimension of $M_2 \cap T^j M$ is equal to 
$$\alpha_2^{j} := \min (d_2 + \alpha_1^j, \alpha_1^{j+1} + \delta_{j+1}) = \min ( d_1 + d_2, \delta_{j+1} + d_2, d_1 + \delta_{j+1}, \delta_{j+2} + \delta_{j+1}) = \min ( d_1 + d_2, \delta_{j+1} + d_2, \delta_{j+2} + \delta_{j+1}) $$ 
Note that $\alpha_2^{e-2} = \min (d_1 + d_2, d_2 + \delta_{e-1}, \delta_e + \delta_{e-1}) = \delta_e + \delta_{e-1}$. Indeed, one has $\delta_e \leq d_e \leq d_2$, and $\delta_e + \delta_{e-1} \leq d_e + d_{e-1} \leq d_1 + d_2$. This means that $T^{e-2} M \subset M_2$. \\
One then constructs the vector spaces $M_1 \subseteq \dots \subseteq M_e = M$ such that $M_i \subseteq T^{-1} M_{i-1}$, and the dimension of $T^j \cap M_i$ is equal to 
$$\alpha_i^{j} := \min ( d_1 + \dots + d_i, \delta_{j+1} + d_2 + \dots +d_i, \dots,  \delta_{j+1} + \dots + \delta_{j+i}) $$ 
Indeed, assume the spaces $M_1, \dots, M_i$ satisfy the required property. One constructs $M_{i+1}$ inside $T^{-1} M_i$, such that the dimension of $M_{i+1} \cap T^j M$ is maximal. Note that the dimension of $T^{-1} M_i$ is equal to 
$$\delta_1 + \alpha_i^{1} = \min ( \delta_1 + d_1 + \dots + d_i, \delta_1+ \delta_{2} + d_2 + \dots +d_i, \dots,  \delta_1 + \delta_{2} + \dots + \delta_{i+1})$$
and this quantity is greater or equal than $d_1 + \dots + d_{i+1}$. The dimension of $M_{i+1} \cap T^j M$ is then equal to
$$\min (\alpha_i^{j} + d_{i+1}  , \alpha_i^{j+1} + \delta_{j+1} ) = \alpha_{i+1}^{j}$$
This gives the result by induction on $i$. Note that the dimension of $M_i \cap T^{e-i} M$ is equal to
$$\alpha_i^{e-i} = \min ( d_1 + \dots + d_i, \delta_{e-i+1} + d_2 + \dots +d_i, \dots,  \delta_{e-i+1} + \dots + \delta_{e}) = \delta_{e-i+1} + \dots + \delta_{e} $$
which proves that $T^{e-i} M \subseteq M_i$. In particular, one sees that $T M \subseteq M_{e-1}$, and the filtration $M_1 \subseteq \dots \subseteq M_e = M$ is indeed a PR datum of type $\mu$.
\end{proof}

\begin{rema}
The above result gives a simpler proof of \cite{BH_ram} Th. 1.3.1, not using exterior algebras.
\end{rema}

\begin{prop} \label{Hdgfilt}
Let $M$ be a $k[T]/T^e -module$ and $1 \leq i \leq e$. Assume that there exists a filtration 
$$0 \subseteq N \subseteq M$$
such that $T^i N = 0$, $T^{e-i} M \subseteq N$. Then $\Hdg(M) \geq \Hdg(N) \star \Hdg(M/N)$.
\end{prop}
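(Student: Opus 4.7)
My plan is to construct an explicit PR datum on $M$ whose type yields precisely the polygon $\Hdg(N) \star \Hdg(M/N)$, and then invoke Theorem~\ref{HdgPR}. The hypothesis $T^i N = 0$ makes $N$ into a $k[T]/T^i$-module, while $T^{e-i} M \subseteq N$ makes $M/N$ into a $k[T]/T^{e-i}$-module. Setting $\delta_j^N := \dim N[T^j]/N[T^{j-1}]$ and $\delta_j^{M/N} := \dim (M/N)[T^j]/(M/N)[T^{j-1}]$, the earlier proposition identifies $\Hdg(N) \star \Hdg(M/N)$ with $P(\delta_1^N, \dots, \delta_i^N, \delta_1^{M/N}, \dots, \delta_{e-i}^{M/N})$.

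Let $\pi: M \to M/N$ denote the projection; my candidate filtration is
$$M_j := N[T^j] \ \text{ for } 0 \leq j \leq i, \qquad M_j := \pi^{-1}\bigl((M/N)[T^{j-i}]\bigr) \ \text{ for } i \leq j \leq e.$$
The two formulas agree at $j = i$ (both give $N$), and $M_e = M$ because $T^{e-i}$ annihilates $M/N$. I will check the three PR axioms: the inclusions $M_{j-1} \subseteq M_j$ are obvious; the successive dimensions come out to $\delta_j^N$ in the first range and $\delta_{j-i}^{M/N}$ in the second by construction; and the relation $TM_j \subseteq M_{j-1}$ is to be verified piecewise. For $j \leq i$ it follows from $T \cdot N[T^j] \subseteq N[T^{j-1}]$. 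For $j = i+1$ and $m \in M_{i+1}$, one has $\pi(Tm) = T\pi(m) = 0$, hence $Tm \in N = M_i$. For $j > i+1$ and $m \in M_j$, one has $T\pi(m) \in (M/N)[T^{j-i-1}]$, so $Tm \in M_{j-1}$.

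The constructed filtration is thus a PR datum of type $\mu := (\delta_1^N, \dots, \delta_i^N, \delta_1^{M/N}, \dots, \delta_{e-i}^{M/N})$. Since the polygon $P(\cdot)$ is insensitive to the order of its arguments, Theorem~\ref{HdgPR} will yield $\Hdg(M) \geq P(\mu) = \Hdg(N) \star \Hdg(M/N)$, which is exactly the claim. I do not anticipate any real obstacle here: the only delicate moment is gluing the two halves of the filtration at $j = i$ and checking $TM_{i+1} \subseteq N$, both of which follow directly from the definition of $(M/N)[T]$.
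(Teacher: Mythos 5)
Your proof is correct and follows essentially the same route as the paper: the paper also produces PR data of type $(\delta^N_1,\dots,\delta^N_i)$ on $N$ and $(\delta^{M/N}_1,\dots,\delta^{M/N}_{e-i})$ on $M/N$, concatenates them into a PR datum on $M$, and concludes via Theorem~\ref{HdgPR}. You merely make the concatenated filtration explicit (via $N[T^j]$ and $\pi^{-1}((M/N)[T^{j-i}])$) and verify the gluing at $j=i$, which the paper leaves implicit.
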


In the above proposition $N$ and $M/N$ are respectively $k[T]/T^i$ and $k[T]/T^{e-i}$-modules.

\begin{proof}
Let $\delta_1 \geq \dots \geq \delta_i$ be the elements such that $\Hdg(N) = P(\delta_1, \dots, \delta_i)$. Similarly, let $\delta_{i+1} \geq \dots \geq \delta_e$ be the elements such that $\Hdg(N) = P(\delta_{i+1}, \dots, \delta_e)$. The module $N$ thus satisfies a PR datum of type $(\delta_1, \dots,  \delta_i)$. The module $M/N$ thus satisfies a PR datum of type $(\delta_{i+1}, \dots,  \delta_e)$. The module $M$ thus satisfies a PR datum of type $(\delta_1, \dots, \delta_e)$. Hence the result.
\end{proof}

\section{Case $e=3$}

Let $k$ be a field, and let $d_1 \geq d_2 \geq d_3$ be integers between $0$ and $h$. Let $\mu =(d_1, d_2, d_3)$.

\begin{defi}
Define $X$ be the set of isomorphisms classes of $k[T]/T^3$-modules $M$ with a PR datum of type $\mu$.
\end{defi} 

\begin{defi}
Let $Y$ be the set consisting of tuples of polygons $(P_1, P_2, P_3)$ such that
\begin{itemize}
\item $P_1 \in \mathcal{P}_3$, $P_2, P_3 \in \mathcal{P}_2$
\item $P_1(h) = d_1+d_2+d_3$, $P_2(h) = d_1+d_2$ and $P_3(h) = d_2+d_3$
\item $P_1 \geq P_2 \star P(d_3)$ and $P_1 \geq P_3 \star P(d_1)$.
\item $P_2 \geq P(d_1, d_2)$ and $P_3 \geq P(d_2, d_3)$
\end{itemize}
\end{defi}

\begin{prop}
There exists a map $\phi : X \to Y$ defined by
$$\phi(0 \subseteq M_1 \subseteq M_2 \subseteq M_3 = M) = (\Hdg(M), \Hdg(M_2), \Hdg(M/M_1))$$
\end{prop}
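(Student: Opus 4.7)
The plan is to verify, for any $M$ equipped with a PR datum $0 \subseteq M_1 \subseteq M_2 \subseteq M_3 = M$ of type $\mu$, that the triple $(\Hdg(M), \Hdg(M_2), \Hdg(M/M_1))$ satisfies each of the four bulleted conditions defining $Y$. Well-definedness on isomorphism classes is immediate, since Hodge polygons only depend on the isomorphism class of a $k[T]/T^i$-module. All the work is in producing the correct polygons and inequalities; I expect this to be a direct application of Theorem \ref{HdgPR} and Proposition \ref{Hdgfilt}, with no real obstacle.

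First I would check the type and dimension conditions. The module $M_2$ is annihilated by $T^2$, because $T^2 M_2 \subseteq T M_1 = 0$, so $\Hdg(M_2) \in \mathcal{P}_2$. Similarly, $T \cdot M \subseteq M_2$ implies $T(M/M_1) \subseteq M_2/M_1$, and $T \cdot M_2 \subseteq M_1$ gives $T(M_2/M_1) = 0$; hence $T^2(M/M_1) = 0$ and $\Hdg(M/M_1) \in \mathcal{P}_2$. The dimensions $\dim M = d_1+d_2+d_3$, $\dim M_2 = d_1+d_2$, $\dim(M/M_1) = d_2+d_3$ then yield the required values at $x = h$.

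For the two inequalities involving $P_1$, I would apply Proposition \ref{Hdgfilt} twice. Taking the filtration $0 \subseteq M_2 \subseteq M$ with $i = 2$ (so $T^2 M_2 = 0$ and $T \cdot M \subseteq M_2$) gives $\Hdg(M) \geq \Hdg(M_2) \star \Hdg(M/M_2)$. Since $M/M_2$ is a $k[T]/T$-module of dimension $d_3$, its Hodge polygon is $P(d_3)$, so $P_1 \geq P_2 \star P(d_3)$. Taking the filtration $0 \subseteq M_1 \subseteq M$ with $i = 1$ (so $T M_1 = 0$ and $T^2 M \subseteq T M_2 \subseteq M_1$) gives $\Hdg(M) \geq \Hdg(M_1) \star \Hdg(M/M_1) = P(d_1) \star P_3$.

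Finally, to obtain $P_2 \geq P(d_1,d_2)$ and $P_3 \geq P(d_2,d_3)$, I would invoke Theorem \ref{HdgPR}. The chain $0 \subseteq M_1 \subseteq M_2$ is itself a PR datum of type $(d_1, d_2)$ on the $k[T]/T^2$-module $M_2$, hence $\Hdg(M_2) \geq P(d_1, d_2)$. Likewise $0 \subseteq M_2/M_1 \subseteq M/M_1$ is a PR datum of type $(d_2, d_3)$ on $M/M_1$ (the quotient $M_2/M_1$ has dimension $d_2$ and $(M/M_1)/(M_2/M_1) \simeq M/M_2$ has dimension $d_3$), giving $\Hdg(M/M_1) \geq P(d_2, d_3)$. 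This completes all four verifications.
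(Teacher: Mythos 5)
Your proof is correct and follows the same route as the paper: the paper's own argument simply states that the first two conditions are obvious, that the third follows from Proposition \ref{Hdgfilt}, and the fourth from Theorem \ref{HdgPR}, which is exactly the structure you spell out (with the right choices of filtration in each application). Your version just supplies the verifications the paper leaves implicit.
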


\begin{proof}
One has to check that the polygons $(\Hdg(M), \Hdg(M_2), \Hdg(M/M_1))$ satisfy the required conditions. The first and second properties are obviously satisfied. The third one follows from \ref{Hdgfilt}. The last one follows from \ref{HdgPR}. \\
Lastly, it is clear that the element $\phi(0 \subseteq M_1 \subseteq M_2 \subseteq M_3 = M) $ depends only on the isomorphism class in $X$.
\end{proof}

\begin{defi}
Define $Y^{adm}$ as the subset of $Y$ consisting of the points $(P_1, P_2, P_3)$ satisfying
$$\delta_1 + \max(d_2, \delta_2) \leq \alpha_1 + \beta_1 $$
with $P_1 = P(\delta_1, \delta_2, \delta_3)$, $P_2 = P(\alpha_1, \alpha_2)$, $P_3 = P(\beta_1, \beta_2)$ ($\delta_1 \geq \delta_2 \geq \delta_3$, $\alpha_1 \geq \alpha_2$, $\beta_1 \geq \beta_2$).
\end{defi}

\begin{theo}
The map $\phi$ induces a bijection between $X$ and $Y^{adm}$.
\end{theo}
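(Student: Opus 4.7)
The plan is to break the result into three steps: (i) $\phi$ lands in $Y^{adm}$; (ii) every admissible triple is attained; (iii) $\phi$ is injective on isomorphism classes. Throughout, I will use the central auxiliary quantity $r := \dim(M_1 \cap TM)$; the dimension count $\dim T^{-1}(M_1) = \dim M[T] + \dim(M_1 \cap TM) = \delta_1 + r$ gives the formula $\beta_1 = \delta_1 + r - d_1$, so $r$ is recovered from $P_1$ and $P_3$.

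For step (i), admissibility should follow from the two PR inclusions $TM \subseteq M_2$ (from $TM_3 \subseteq M_2$) and $TM_2 \subseteq M_1$. The first, combined with $M_1 \subseteq M_2$, yields $M_2 \cap M[T] \supseteq M_1 + T(M[T^2])$; since $M_1 \subseteq M[T]$ forces $M_1 \cap T(M[T^2]) = M_1 \cap TM$ of dimension $r$, this gives $\alpha_1 \geq d_1 + \delta_2 - r$. The second gives $T(M_2) \subseteq M_1 \cap TM$, whence $\alpha_1 \geq d_1 + d_2 - r$. Adding the formula for $\beta_1$ to each lower bound produces $\alpha_1 + \beta_1 \geq \delta_1 + \delta_2$ and $\alpha_1 + \beta_1 \geq \delta_1 + d_2$, which is the admissibility condition.

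For step (ii), I will fix $M = (k[T]/T)^{\delta_1 - \delta_2} \oplus (k[T]/T^2)^{\delta_2 - \delta_3} \oplus (k[T]/T^3)^{\delta_3}$ (so that $\Hdg(M) = P_1$) and set $r := \beta_1 + d_1 - \delta_1$. From the polygon conditions $P_1 \geq P_2 \star P(d_3)$ and $P_1 \geq P_3 \star P(d_1)$, reordering the triples in descending order extracts $\alpha_2, \beta_2 \geq \delta_3$; combined with admissibility this forces $\delta_3 \leq r \leq \min(d_1, \delta_2)$ and $r \geq d_1 + \delta_2 - \delta_1$. These are exactly the Grassmannian bounds permitting the construction of $M_1 \subseteq M[T]$ of dimension $d_1$, containing $T^2 M$, with $\dim(M_1 \cap TM) = r$. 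A parallel dimension count then shows that $M_2$ in the pencil $M_1 + TM \subseteq M_2 \subseteq T^{-1}M_1$, of dimension $d_1 + d_2$ and with $\dim(M_2 \cap M[T]) = \alpha_1$, exists: admissibility provides the required lower bound $\alpha_1 \geq d_1 + \max(d_2, \delta_2) - r$, while the conditions defining $Y$ give the upper bound $\alpha_1 \leq \min(\delta_1, d_1 + d_2 - \delta_3)$.

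For step (iii), I will show that $\Aut(M)$ acts transitively on PR data realizing $(P_2, P_3)$ inside the fixed $M$ above. The image of $\Aut(M) \to \Aut_k(M[T])$ contains the subgroup preserving the canonical filtration $0 \subset T^2 M \subset T(M[T^2]) \subset M[T]$, and this acts transitively on subspaces of $M[T]$ with given triple of intersection dimensions; hence $M_1$ is determined up to $\Aut(M)$ by $(d_1, r, \delta_3)$. Given $M_1$, the stabilizer should act transitively on the $M_2$'s in the pencil above with prescribed $\alpha_1$, which I would verify by parameterizing $M_2$ via its image in $T^{-1}M_1/(M_1+TM)$ and using the explicit description of $\End(M)$ as a matrix of Hom-spaces $\Hom(k[T]/T^a, k[T]/T^b)$ between the summands. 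The hard part will be this final transitivity statement: producing enough automorphisms of $M$ stabilizing $M_1$ and moving one $M_2$ to another with the same $\alpha_1$ requires a case analysis on the relative positions of the $\delta_i$'s and $d_i$'s to track which off-diagonal Hom-components of $\End(M)$ are available.
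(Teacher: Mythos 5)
Your steps (i) and (ii) are correct and are essentially the paper's own analysis of the image: the necessity of the admissibility inequality comes, as you say, from $TM\subseteq M_2$ and $TM_2\subseteq M_1$ via the quantity $r=\dim(M_1\cap TM)=\beta_1+d_1-\delta_1$, and the sufficiency is the same two-stage construction as in the paper (choose $M_1$ with $T^2M\subseteq M_1\subseteq M[T]$ and prescribed $r$, then choose $M_2$ in the pencil $M_1+TM\subseteq M_2\subseteq T^{-1}M_1$ with prescribed $\alpha_1=\dim(M_2\cap M[T])$), with exactly the bounds $\max(\delta_3,d_1+\delta_2-\delta_1)\leq r\leq\min(d_1,\delta_2)$ and $d_1+\max(d_2,\delta_2)-r\leq\alpha_1\leq\min(\delta_1,d_1+d_2-\delta_3)$ that the paper derives; your extraction of $\alpha_2,\beta_2\geq\delta_3$ from $P_1\geq P_2\star P(d_3)$ and $P_1\geq P_3\star P(d_1)$ correctly supplies what the paper phrases as ``$\beta_2\leq\delta_2\leq\beta_1$ is implied by the definition of $Y^{adm}$''.

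The genuine gap is in step (iii), which is the heart of the theorem. Injectivity is equivalent to the statement that, for fixed $M$, $\Aut(M)$ acts transitively on pairs $(M_1,M_2)$ with given invariants, and your first reduction is fine: the image of $\Aut(M)$ in $\GL(M[T])$ is the parabolic of the flag $T^2M\subseteq T(M[T^2])\subseteq M[T]$, whose orbits on the Grassmannian are cut out by the intersection dimensions, so $M_1$ is normalized by $(d_1,r)$. But the remaining claim --- that $\Stab_{\Aut(M)}(M_1)$ acts transitively on the $M_2$'s with $M_1+TM\subseteq M_2\subseteq T^{-1}M_1$ and given $\alpha_1$ --- is precisely what you label ``the hard part'' and do not prove. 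It is not a formal Schubert-cell fact: the image of $\Stab_{\Aut(M)}(M_1)$ in $\GL\bigl(T^{-1}M_1/(M_1+TM)\bigr)$ preserves more canonical subspaces than just $(M[T]+M_1)/(M_1+TM)$, so one must actually exhibit enough unipotent automorphisms of $M$ stabilizing $M_1$ to move one admissible $M_2$ to another; this is exactly where the restriction $e=3$ enters (for $e\geq 4$ the analogous transitivity fails and the polygons no longer determine the isomorphism class). The paper does carry out this normalization, concretely: it embeds $M$ in the free module $H=(k[T]/T^3)^{\delta_1}$, uses the chain $T^2M\subseteq TM_2\subseteq TM\cap M_1\subseteq M_1\subseteq M[T]$ with dimensions $\delta_3,\alpha_2,\beta_1+d_1-\delta_1,d_1,\delta_1$, and performs explicit changes of basis putting first $M_1$ and then $M_2$ into a standard form determined by the polygons. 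Until you either carry out your announced case analysis of the available off-diagonal components of $\End(M)$, or replace it by such an explicit normalization, the injectivity half of the theorem remains unproved in your proposal.
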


\begin{proof}
Let us prove the injectivity. We keep the notation from the previous definition. We then have
$$ M \simeq (k[T]/T^3)^{\delta_3} \oplus (k[T]/T^2)^{\delta_2 - \delta_3} \oplus (k[T]/T)^{\delta_1- \delta_2} $$
This gives the structure of $M$ up to isomorphism. Define the free $k[T]/T^3$-module $H := (k[T] /T^3)^{\delta_1}$, with basis $e_1, \dots, e_{\delta_1}$. Then $M$ is isomorphic to the submodule of $H$ generated by
$$e_1, \dots, e_{\delta_3}, Te_{\delta_3 + 1}, \dots, Te_{\delta_2}, T^2 e_{\delta_2+1}, \dots, T^2 e_{\delta_1}$$
One has the inclusion 
$$T^2 M \subseteq T M_2 \subseteq TM \cap M_1 \subset M_1 \cap M[T]$$
The dimensions of these spaces are $\delta_3, \alpha_2, d_1 + \beta_1 - \delta_1, d_1, \delta_1$. One can then up to a change of basis, assume that $M_1$ has basis
$$T^2 e_1, \dots, T^2 e_{d_1 + \beta_1 - \delta_1}, T^2 e_{\delta_2 + 1}, \dots, T^2 e_{\delta_2 + \delta_1 - \beta_1}$$
the space $T M_2$ having $e_1, \dots, e_{\alpha_2}$ as basis. \\
Up to a change of basis, one can moreover assume that $M_2$ has basis over $k$ given by
$$T e_1, \dots, T e_{\alpha_2}, T^2 e_1, \dots, T^2 e_{\alpha_1}$$
This proves that given an element $y \in Y$, if there exists a module $M$ mapping to $y$, it is uniquely determined up to an isomorphism, hence the injectivity of $\phi$. \\
Let us now prove the image of $\phi$ is exactly $Y^{adm}$. \\
Let $\delta_1 \geq \delta_2 \geq \delta_3$, with $\delta_1 + \delta_2 + \delta_3 = d_1 + d_2 + d_3$, $P(\delta_1, \delta_2, \delta_3) \geq P(d_1, d_2, d_3)$ and define the modules $M$ as before. We will see what are the possible structures of $PR$ datum that one can impose on $M$. \\
First of all, one must have $T^2 M \subseteq M_1 \subseteq M[T]$. Note that one has indeed $\delta_3 \leq d_1 \leq \delta_1$. Then one gets the condition
$$ \max(\delta_3, d_1 + \delta_2  - \delta_1) \leq \dim (M_1 \cap TM) \leq \min (d_1, \delta_2)$$
If $\Hdg(M/M_1) = (\beta_1, \beta_2)$, then $\dim (M_1 \cap TM) = \beta_1 + d_1 - \delta_1$. The inequality $P(\delta_1, \delta_2, \delta_3) \geq P(\beta_1, \beta_2, d_1)$ implies automatically the inequality $\dim (M_1 \cap TM) \leq \min (d_1, \delta_2)$. The other inequality gives the conditions $\beta_2 \leq \delta_2 \leq \beta_1$. \\
Now assume that $M_1$ is constructed. One must construct $M_2$ with the conditions
$$TM + M_1 \subseteq M_2 \subseteq T^{-1} M_1$$
These vector spaces have dimension $\delta_1 + \delta_2 + \delta_3 - \beta_1$, $d_1 + d_2$ and $\beta_1 + d_1$ respectively. The relation $P(\beta_1, \beta_2) \geq P(d_2, d_3)$ implies that the integers are in increasing order. The condition for the dimension of $M_2 \cap ( TM +M[T])$ is then
$$\max(\delta_1 + \delta_2 + \delta_3 - \beta_1, d_1 + d_2 + \delta_1 + \delta_3 - \beta_1 - d_1  ) \leq \dim ( M_2 \cap (TM +M[T])) \leq \min (d_1 + d_2,\delta_1 + \delta_3)$$
Note that $\dim ( M_2 \cap (TM +M[T]))$ must be equal to $\dim (M_2 \cap M[T]) + \delta_3$. If we denote by $\alpha_1$ the quantity $\dim (M_2 \cap M[T])$, the conditions are then
$$\max(\delta_1 + \delta_2 - \beta_1,  d_2 + \delta_1  - \beta_1  ) \leq \alpha_1 \leq \min (d_1 + d_2 - \delta_3,\delta_1)$$
The inequality $\alpha_1 \leq \min (d_1 + d_2 - \delta_3,\delta_1)$ is automatically satisfied if one has $P(\delta_1, \delta_2, \delta_3) \geq P(\alpha_1, \alpha_2, d_3)$ and $\alpha_1 + \alpha_2 = d_1 + d_2$. The other inequality gives the condition in the definition of $Y^{adm}$. \\
To finish the proof, one only checks that the condition $\beta_2 \leq \delta_2 \leq \beta_1$ is implied by the definition of $Y^{adm}$.
\end{proof}

\section{Hodge stratification}

\subsection{Unitary Case}

Let $F_0$ be a totally real field, and $F/F_0$ be a CM extension. Let $\Sigma$ be the set of embeddings of $F_0$ into $\overline{\mathbb{Q}_p}$; for each $\sigma \in \Sigma$, let $a_\sigma, b_\sigma$ be two integers such that the quantity $h :=a_\sigma + b_\sigma$ does not depend on $\sigma$. Let $k_0$ be a finite field containing all the residue fields of $F$.

\begin{defi}
Let $X$ be the PR variety over $k_0$ associated to $F/F_0$, with signature $(a_\sigma, b_\sigma)$.
\end{defi}

We refer to \cite{BH_PR} section 2 for the precise definition of $X$, but let us explain the main point of this variety. One has an universal abelian scheme $A$ over $X$, endowed with an action of $O_F$ (the ring of integers of $F$), a polarization (with some compatibility with the action of $O_F$). Let us now describe the PR datum. One has a sheaf $\omega_0 := \omega_{A}$ on $X$; it is locally free of rank $h [F_0 : \mathbb{Q}]$ and has an action of $O_F$. One has a decomposition $\Sigma = \coprod_{\pi |p} \Sigma_\pi$, where $\pi$ runs through the places of $F_0$ over $p$, and $\Sigma_\pi$ is the subset if embeddings above $\pi$. The sheaf $\omega_0$ decomposes as $\omega_0 = \oplus_{\pi | p } \omega_\pi$. One then distinguish several cases. \\
$ $\
Let $\pi$ be a prime of $F_0$ above $p$ and assume that $\pi$ splits as $\pi^+ \pi^-$ in $F$. Then the sheaf $\omega_\pi$ decomposes as $\omega_\pi^+ \oplus \omega_\pi^-$. The sheaf $\omega_\pi^+$ is locally free of rank $\sum_{\sigma \in \Sigma_\pi} a_\sigma$. Let $F_{0,\pi}$ be the completion of $F_0$ at $\pi$, $e$ the ramification index and $f$ the residual degree. Let $F_{0,\pi}^{ur}$ be the maximal unramified extension inside $F_{0,\pi}$, and let $\Sigma_\pi^{ur}$ be the set of embeddings of $F_{0,\pi}^{ur}$. One has the decomposition
$$\omega_\pi^+ = \oplus_{\sigma \in \Sigma_\pi^{ur}} \omega_{\pi, \sigma}^+$$
Let us fix an embedding $\sigma \in \Sigma_\pi^{ur}$, and let us consider the sheaf $\omega := \omega_{\pi, \sigma}^+$. It is locally free of rank $\sum a_\tau$, where the sum runs through the embeddings $\tau$ of $F_{0,\pi}$ extending $\sigma$. The PR datum thus consists in a filtration
$$0 \subseteq \omega_1 \subseteq \dots \subseteq \omega_e = \omega$$
where each graded part locally free of rank $a_i$. Note that this needs an ordering on the $a_\tau$. \\
$ $\\
Now let $\pi$ be a place above $p$ in $F_0$, and assume that $\pi$ is inert in $F$. Let $F_{0,\pi}$ be the completion of $F_0$ at $\pi$, $e$ the ramification index and $f$ the residual degree. Let $F_{0,\pi}^{ur}$ be the maximal unramified extension inside $F_{0,\pi}$, and let $\Sigma_\pi^{ur}$ be the set of embeddings of $F_{0,\pi}^{ur}$. Let $F_\pi$ be the completion of $F$ at $\pi$ and $F_{\pi}^{ur}$ be the maximal unramified extension inside $F_{pi}$. The sheaf $\omega_\pi$ decomposes as $\sum_{\sigma \in \Sigma_\pi^{ur}} \omega_{\pi, \sigma}$. The sheaf $\omega_{\pi, \sigma}$ is locally free of rank $eh$, and decomposes as $\omega_{\pi, \sigma, 1} \oplus \omega_{\pi, \sigma, 2}$, according to the action of the ring of integers of $F_\pi^{ur}$. Let us write $\omega := \omega_{\pi, \sigma, 1}$, it is locally free of rank $\sum a_\tau$, where the sum runs through the embeddings $\tau$ of $F_{0,\pi}$ extending $\sigma$. Similarly as in the previous case, one has a PR datum for this sheaf. \\
$ $\\
The case where the prime $\pi$ ramifies in $F_0$ can be dealt in a slightly more involved matter. Since we will not consider this case, we do not give any more details.

\begin{hypo}
We considers a prime $\pi$ of $F_0$ which does not ramified in $F$. We also suppose that the ramification index of $\pi$ is $3$.
\end{hypo}

Let us fix an embedding $\sigma \in \Sigma_\pi^{ur}$. One has thus a sheaf $\omega$ locally free of rank $a_1 + a_2 + a_3$, with a PR filtration
$$\omega_1 \subseteq \omega_2\subseteq \omega$$
with $\omega_1$ locally free of rank $a_1$,$\omega_2 / \omega_1$ locally free of rank $a_2$, and $\omega / \omega_2$ is locally free of rank $a_3$.

Let $\mathcal{E}$ be the $\sigma$ part of the De Rham cohomology. It is locally free of rank $h$ over $\mathcal{O}_X [T] / T^e$, and by the Hodge filtration it has $\omega$ as locally a direct factor. \\

\begin{defi}
Let $k$ be a field in characteristic $p$, and $x \in X(k)$. The above datum defines a unique element $Hdg(x) \in Y^{adm}$.
\end{defi}

The Hodge stratification (attached to $\sigma$) is then 
$$X = \coprod_{y \in Y^{adm}} X_y$$
where $X_y$ consists of all the points of $X$ with $Hdg(x) = y$.

Let us define an order on $Y$.

\begin{defi}
Let $y = (P_1, P_2, P_3)$ and $y' = (P_1', P_2', P_3')$ be elements of $Y$. One says that $y \geq y'$ if $P_i \geq P_i'$ for all $i \in \{1,2,3 \}$.
\end{defi}

\begin{theo}
Let $y \in Y^{adm}$. Then
$$ \overline{X_y} = \coprod_{y' \geq y} X_{y'} $$
\end{theo}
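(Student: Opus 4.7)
The plan is to prove the two inclusions separately. For $\overline{X_y} \subseteq \coprod_{y' \geq y} X_{y'}$, the key point is the upper semi-continuity of each of the three polygons making up $\Hdg(x)$. Given any coherent sheaf $\mathcal F$ on a base $S$ equipped with an $\mathcal O_S$-linear nilpotent operator $T$ with $T^e = 0$, the functions $s \mapsto \dim \mathcal F_s[T^i]$ are upper semi-continuous, being dimensions of kernels of $\mathcal O_S$-linear maps between locally free sheaves. By Proposition 2.3, $\Hdg(\mathcal F_s) = P(\delta_1, \dots, \delta_e)$ is computed from these kernel dimensions, and by Proposition 1.5 the partial sums $\delta_1+\cdots+\delta_i$ precisely govern the partial order on polygons; hence $\Hdg(\mathcal F_s)$ can only rise under specialization. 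Applied successively to $\omega$, $\omega_2$, and $\omega/\omega_1$ (the last two being $\mathcal O_X[T]/T^2$-modules), this yields the upper semi-continuity of all three components of $\Hdg(x)$, and hence the first inclusion.

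For the reverse inclusion, I fix $y' \geq y$ in $Y^{adm}$ and a point $x' \in X_{y'}(k)$, and need to produce a morphism $\Spec(R) \to X$ from a discrete valuation ring $R$ sending the closed point to $x'$ and the generic point into $X_y$. By transitivity of closure, it suffices to treat pairs $y' \geq y$ adjacent in the order on $Y^{adm}$. Since the PR variety is smooth by \cite{BH_PR} and admits a local model description, the local structure of $X$ at $x'$ is governed by the deformations of the flag $\omega_1 \subseteq \omega_2 \subseteq \omega$ inside the locally free $\mathcal O_X[T]/T^3$-module $\mathcal E$. Starting from the explicit basis representation of $(M, M_1, M_2)$ constructed in the proof of the preceding theorem from the invariants $(\delta_i, \alpha_j, \beta_j)$ of $y'$, I would perturb the entries describing $M_1$ and $M_2$ $t$-linearly, producing a one-parameter family over $k[[t]]$ whose special fiber realises $x'$ and whose generic fiber has invariants $(\delta_i', \alpha_j', \beta_j')$ matching those of $y$.

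The main obstacle is carrying out this case analysis. One must first enumerate the covering relations in $Y^{adm}$; these correspond to minimal moves in the triple of polygons, which may combine changes in several of $P_1, P_2, P_3$ simultaneously if the admissibility condition $\delta_1 + \max(d_2, \delta_2) \leq \alpha_1 + \beta_1$ forces it. For each covering, one must then exhibit an explicit deformation of PR filtrations over $k[[t]]$, verify that the PR conditions are preserved, and confirm that the generic fiber carries precisely the expected invariants (so the generic point lands in $X_y$ and not in a proper intermediate stratum). Because $\phi : X \to Y^{adm}$ is a bijection on isomorphism classes, it suffices to construct the family of filtered modules abstractly; the smoothness of $X$ together with the local model then guarantees that any such formal family arises from a genuine morphism $\Spec(k[[t]]) \to X$, completing the argument.
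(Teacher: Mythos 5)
Your first inclusion is fine and is essentially the paper's argument: the paper simply cites Katz for the fact that the Hodge polygon rises under specialization, and your elaboration via upper semi-continuity of $\dim \mathcal F_s[T^i]$, applied to $\omega$, $\omega_2$ and $\omega/\omega_1$, is a correct way to make that precise.

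The reverse inclusion is where the proof actually lives, and what you give is a plan with two genuine gaps. First, your closing reduction --- that because $\phi$ is a bijection ``it suffices to construct the family of filtered modules abstractly'' --- does not work. The bijection $\phi$ classifies isomorphism classes of $k[T]/T^3$-modules with PR datum; it does not convert an abstract family over $k[[t]]$ into a morphism $\Spec k[[t]] \to X$. To obtain such a morphism (via Serre--Tate and Grothendieck--Messing as in the paper, or via the local model) one must deform the \emph{embedded} filtration $\omega_1 \subseteq \omega_2 \subseteq \omega \subseteq \mathcal E$ as a chain of direct summands of $\mathcal E_R$ whose special fibre is the given one at $x'$, while forcing prescribed intersection dimensions in the generic fibre. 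The technical heart of the paper is exactly a lemma to this effect (Lemma \ref{lemmfilt}): a subspace $\overline L \subseteq \overline M$ lifts to a direct summand $L \subseteq M$ over $k[[X]]$ with prescribed, smaller, generic intersection dimensions against a fixed flag. It is applied successively to $\omega_1$, $\omega_2$ and $\omega$ against flags built from $\mathcal E_R[T]$, $T^{-1}\omega_{1,R}$ and the auxiliary modules $\mathcal F$, $\mathcal G$, with target dimensions read off from $y$. Your proposal contains no substitute for this lemma; ``perturb the entries $t$-linearly'' gives no control on the generic invariants, so you cannot guarantee the generic point lands in $X_y$ rather than some intermediate stratum. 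Second, your reduction to covering relations in $Y^{adm}$, followed by a case analysis you explicitly defer as ``the main obstacle,'' is both unexecuted and unnecessary: the enumeration of minimal moves is delicate because the admissibility inequality couples the three polygons, and the paper avoids it entirely by lifting from $y'$ to an arbitrary $y \leq y'$ in a single step --- the inequalities $\delta_1 + \alpha_2 \leq \min(\delta_1' + \alpha_2, \beta_1' + d_1)$, $\beta_1 + d_2 \leq \beta_1' + d_1$, $\delta_1 + \delta_2 \leq \min(\delta_1' + \delta_2', \alpha_1 + \beta_1')$ needed to invoke the lifting lemma hold for any comparable pair in $Y^{adm}$. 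As written, the proposal identifies the correct framework but omits the construction that constitutes the proof.
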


\begin{proof}
The Hodge polygon goes up by specialization (see \cite{Ka}). This proves the inclusion 
$$ \overline{X_y} \subseteq \coprod_{y' \geq y} X_{y'} $$
Now, let $y' \geq y$ and $x \in X_{y'} (k)$. We want to prove that there exists a deformation of $x$ which is in $X_y$. \\
Let us write $y' = (P_1', P_2', P_3')$, with $P_1' = P(\delta_1', \delta_2', \delta_3')$, $P_2 = P(\alpha_1', \alpha_2')$, $P_3 = P(\beta_1', \beta_2')$ ($\delta_1' \geq \delta_2' \geq \delta_3'$, $\alpha_1' \geq \alpha_2'$, $\beta_1' \geq \beta_2'$), and similarly $y = (P_1, P_2, P_3)$. By Serre-Tate and Grothendieck-Messing (see \cite{BBM}), it is enough to lift the Hodge filtration. Let $R = k[[X]]$, and let $\mathcal{E}_R := \mathcal{E} \otimes R$. One lifts $\omega$ to  direct summand $\omega_R \subseteq \mathcal{E}_R$ such that $\omega_R \otimes_R k((X))$ will be a PR datum of type $y$. \\
One lifts successively the filtration $0 \subseteq \omega_1 \subseteq \omega_2 \subseteq \omega$. First, one lifts $\omega_1$ to $\omega_{1, R}$ a free $R$-module of rank $d_1$ inside $\mathcal{E}_R [T]$. Then one considers the filtration
$$ \omega_{1,R} \subseteq \mathcal{E}_R [T] \subseteq T^{-1} \omega_{1,R}$$
One lifts $\omega_{2,R}$ inside $T^{-1} \omega_{1,R}$, such that it contains $ \omega_{1,R}$, and the intersection with $\mathcal{E}_R [T]$ has dimension $\alpha_1$ in generic fiber. This is possible since the intersection of $\mathcal{E} [T] $ and $\omega_2$ has dimension $\alpha_1' \geq \alpha_1$. Let $\mathcal{F}_{k((X))} := 
\mathcal{E}_{k((X))} [T] + (\omega_{2,R} \otimes_R k((X)))$, and $\mathcal{G}_{k((X))} := \mathcal{E}_{k((X))} [T^2] \cap (T^{-1} (\omega_{2,R} \otimes_R k((X))))$. These are $k((X))$ vector spaces of dimension $h+ \alpha_2$ and $h + \alpha_1$ respectively. Let $\mathcal{F} := \mathcal{F}_{k((X))} \cap \mathcal{E}_R$, and $\mathcal{G} := \mathcal{G}_{k((X))} \cap \mathcal{E}_R$. We will assume that the lift of $\omega_2$ is done in such a way that the intersection of $\omega$ with the reduction of $\mathcal{F}$ and $\mathcal{G}$ have maximal dimension. These dimensions can be made equal to 
$$ \min (\delta_1' + \alpha_2, \beta_1' + d_1) \qquad \min (\delta_1' + \delta_2', \alpha_1 + \beta_1')$$
respectively. One then considers the filtration
$$\omega_{2,R} \subseteq \mathcal{F} \subseteq T^{-1} \omega_{1,R} \subseteq \mathcal{G} \subseteq T^{-1} \omega_{2,R}$$
One lifts $\omega$ to $\omega_R$ inside $T^{-1} \omega_{2,R}$ and containing $\omega_{2,R}$ such that the intersection with $\mathcal{F}$, $T^{-1} \omega_{1,R}, \mathcal{G}$ have dimension $\delta_1 + \alpha_2, \beta_1 + d_2, \delta_1 + \delta_2$ respectively in generic fiber. This is indeed possible since one has
$$\delta_1 + \alpha_2 \leq \min (\delta_1' + \alpha_2, \beta_1' + d_1)$$
$$ \beta_1 + d_2 \leq \beta_1'+d_1$$
$$\delta_1 + \delta_2 \leq \min (\delta_1' + \delta_2', \alpha_1 + \beta_1')$$
To achieve this, one uses the following lemma.
\end{proof}

\begin{lemm} \label{lemmfilt}
Let $R = k[[X]]$, and consider $M$ a free $R$ module of rank $h$. Assume that one has direct summands
$$0 \subseteq M_1 \subseteq M_2 \subseteq \dots \subseteq M_l = M$$
such that $M_i$ is free of rank $h_i$. Let $\overline{M_i} :=M_i \otimes_R k$, and let $\overline{L}$ be a vector subspace of $\overline{M}$, and let $d_i := \dim \overline{L} \cap \overline{M_i}$.  One has automatically $0 \leq d_{i+1} - d_i \leq h_{i+1} - h_i$. \\
Let $d_1', \dots d_l'$ be integers, with $d_l' = d_l$, $0 \leq d_{i+1}' - d_i' \leq h_{i+1} - h_i$, and $d_i' \leq d_i$ for $1 \leq i \leq l-1$. Then there exists a lift $L \subseteq M$ of $\overline{L}$, which is a direct summand, such that the intersection of $L$ with $M_i$ has dimension $d_i'$ in generic fiber. 
\end{lemm}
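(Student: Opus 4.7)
The plan is to construct $L$ by producing an $R$-basis $(f_j)_{1 \leq j \leq d_l}$ adapted both to the filtration $(M_i)$ and to the prescribed generic intersection profile. Everything hinges on a careful choice of $R$-basis of $M$: I would lift the graded pieces to $e_1, \dots, e_h$ with $e_1, \dots, e_{h_i}$ a basis of $M_i$, arranged within each graded piece so that the first $a_i := d_i - d_{i-1}$ elements $\overline{e}_{h_{i-1}+1}, \dots, \overline{e}_{h_{i-1}+a_i}$ span the image of $\overline{L} \cap \overline{M_i}$ in $\overline{M_i}/\overline{M_{i-1}}$. This gives a filtered basis $\overline{f_j} = \overline{e_{\tau(j)}}$ of $\overline{L}$ via an injection $\tau$ placing $\overline{f_j}$ at an index in the block attached to its native level $i_0(j)$ (the unique $i$ with $d_{i-1} < j \leq d_i$).

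Next comes a combinatorial step: setting $a_i' := d_i' - d_{i-1}'$, I choose an assignment $j \mapsto i(j) \geq i_0(j)$ with $|\{j : i(j) \leq i\}| = d_i'$ for every $i$. A greedy procedure that keeps $\min(a_i, a_i')$ marbles at level $i$ and promotes the excess to higher levels where they are needed is feasible because $d_i' \leq d_i$ for all $i$ with $d_l' = d_l$. The quantitative point, crucial for the next step, is that with this maximal-overlap assignment the number of incoming promoted marbles at level $i$ equals $\max(a_i' - a_i, 0) \leq h_i - h_{i-1} - a_i$, so they can be placed at fresh positions avoiding those already used by $\tau$. For each $j$ with $i(j) > i_0(j)$, I thus choose $\tau'(j) \in \{h_{i(j)-1} + a_{i(j)} + 1, \dots, h_{i(j)}\}$ such that $\tau'$ is injective with image disjoint from that of $\tau$, and set
$$ f_j := \begin{cases} e_{\tau(j)} & \text{if } i(j) = i_0(j), \\ e_{\tau(j)} + X\, e_{\tau'(j)} & \text{otherwise,} \end{cases} \qquad L := \sum_{j=1}^{d_l} R f_j. $$

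The verification is then largely formal. Modulo $X$ the $f_j$ reduce to $\overline{e_{\tau(j)}}$, so $L \otimes_R k = \overline{L}$; completing $(\overline{f_j})$ to a basis of $\overline{M}$ and applying Nakayama shows that $(f_j)$ is part of an $R$-basis of $M$, hence $L$ is a direct summand of rank $d_l$. For $j$ with $i(j) \leq i$, both $\tau(j)$ and $\tau'(j)$ (when present) are $\leq h_i$, so $f_j \in M_i$ and $(L \cap M_i) \otimes_R k((X))$ has dimension at least $d_i'$. The main obstacle, and the only nontrivial point, is the matching upper bound: one has to show that $\{f_j \bmod M_i\}_{i(j) > i}$ is linearly independent in $(M/M_i) \otimes_R k((X))$. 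Expanding a relation $\sum a_j f_j \equiv 0 \pmod{M_i}$ in the basis $\overline{e}_{h_i+1}, \dots, \overline{e}_h$ yields, at each position $k > h_i$, the equation
$$ \Bigl(\sum_{\tau(j)=k,\, i_0(j)>i} a_j\Bigr) + X \Bigl(\sum_{\tau'(j)=k} a_j\Bigr) = 0, $$
and the injectivity of $\tau, \tau'$ together with the disjointness of their images forces every $a_j$ to vanish. The delicacy of the proof is therefore concentrated in the combinatorial choice of $i(j)$ and $\tau'(j)$; once they are in hand, both the lift and the dimension computation are immediate.
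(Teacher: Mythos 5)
Your construction is correct and is essentially the paper's own proof: the paper likewise builds a basis of $L$ inductively along the filtration, taking $f_j = e_j$ or $f_j = e_j + Xv$ with the perturbing vectors $v$ assigned at a later stage to complementary basis vectors of $M_{i}/M_{i-1}$ --- exactly your promoted marbles $e_{\tau'(j)}$, with the same counting ($\max(a_i'-a_i,0)$ promotions absorbed at level $i$, $d_i - d_i'$ vectors still pending after level $i$). Your write-up is if anything more complete, since you also verify the exact generic-fibre dimensions and the direct-summand property, which the paper leaves implicit.
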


\begin{proof}
One constructs a basis $f_1, \dots, f_{d_l}$ for the lift $L$ inductively. \\
The space $\overline{L} \cap \overline{M_1}$ has dimension $d_1'$, let $e_1, \dots, e_{d_1}$ be a family of $M_1$ such that its reduction is a basis for $\overline{L} \cap \overline{M_1}$. We set $f_i = e_i$ for $1 \leq i \leq d_1'$, and $f_i = e_i + X v_{i-d_1'}$ for $d_1'+ 1 \leq i \leq d_1$, where the elements $v_1, \dots, v_{d_1'-d_1}$ are vectors yet to be determined. \\
Next, one distinguish two cases. First assume that $d_2' - d_1' \leq d_2 - d_1$. Let $e_{d_1+1}, \dots, e_{d_2}$ be a family of $M_2$, such that the reduction is a basis for $\overline{L} \cap \overline{M_2} / \overline{L} \cap \overline{M_1}$. We set $f_i = e_i$ for $d_1 +1 \leq i \leq d_1 + d_2' - d_1'$, and $f_i = e_i + X w_{i+d_1' - d_1 - d_2'}$ for $d_1 + d_2' - d_1' +1  \leq i \leq d_2 $, where the vectors $w_i$ are yet to be determined. In total, there are thus $d_2 - d_2'$ vectors to be determined. There is then $d_2 - d_2'$ vectors to determine in total. \\
Now assume that $d_2' - d_1' > d_2 - d_1$. We set $f_i = e_i$ for $d_1 + 1 \leq i \leq d_2$. One also complete the family $e_{d_1 + 1}, \dots, e_{d_2}$ into $e_{d_1 +1}, \dots, e_{d_1 + h_2 - h_1}$, which is a basis for $M_2 / M_1$. One then sets $v_i = e_{d_2 + i}$ for $1 \leq i \leq (d_2' - d_1') - (d_2 - d_1)$. This is possible since $d_2' - d_1' \leq h_2 - h_1$. After that step, there is only $d_2 - d_2'$ vectors to determine. \\
One repeats this process. Since $d_l = d_l'$, at the end there is no more vectors to determine, and this is how one constructs the lift $L$.
\end{proof}

\section{Hilbert-Siegel variety}

In this section $F$ denotes a totally real field. We denote by $X$ the Hilbert-Siegel variety attached to $F$ with PR condition (see \cite{BH_PR} section 2.4). Let us fix a prime $\pi$ above $p$ in $F$, such that the ramification index is $3$. Let us fix an unramified embedding $\sigma$. We consider the sheaf $\omega_\sigma$ on $X$. It is locally free of rank $3g$, and has an action of $k[T]/T^3$. It has a PR datum
$$ \omega_1 \subseteq \omega_2 \subseteq \omega$$
such that each graded part is locally free of rank $g$. Ine also consider the $\sigma$ part of de Rham cohomology $\mathcal{E}$, which is locally free of rank $6g$. It is also equipped with a pairing induced by the polarization, and $\omega \subseteq \mathcal{E}$ is totally isotropic for this pairing. The PR datum is compatible with the pairing in the sense that
$$\omega_1^\bot = T^{-2} \omega_1 \qquad \omega_2^\bot = T^{-1} \omega_2$$
these equalities taking place in $\mathcal{E}$. 

\begin{defi}
We define the subset $Y^{pol} \subseteq Y^{adm}$ as the subset consisting of the points $(P_1, P_2, P_3) \in Y^{adm}$, with
$$P_1 = P(r, g, 2g-r)$$
for some integer $g \leq r \leq 2g$. 
\end{defi}

If $x \in X(k)$, the the PR datum will land in $Y^{pol}$. This is because the polarization induces an isomrophism $\omega \simeq (\mathcal{E} / \omega)^\vee$.

\begin{defi}
Let $k$ be a field in characteristic $p$, and $x \in X(k)$. The above datum defines a unique element $Hdg(x) \in Y^{pol}$.
\end{defi}

The Hodge stratification (attached to $\sigma$) is then 
$$X = \coprod_{y \in Y^{pol}} X_y$$
where $X_y$ consists of all the points of $X$ with $Hdg(x) = y$.

\begin{theo}
Let $y \in Y^{pol}$. Then
$$ \overline{X_y} = \coprod_{y' \geq y} X_{y'} $$
\end{theo}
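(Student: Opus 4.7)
The approach mirrors the proof of the unitary-case theorem, with the polarization added as an extra constraint that must be maintained throughout the deformation. The first inclusion $\overline{X_y} \subseteq \coprod_{y' \geq y} X_{y'}$ follows from upper semi-continuity of the Hodge polygon under specialization (\cite{Ka}): each of the three polygons $\Hdg(\omega)$, $\Hdg(\omega_2)$, $\Hdg(\omega/\omega_1)$ can only rise on specialization, so $y' \geq y$ on any specialization of a point of $X_y$.

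For the reverse inclusion I would fix $y' \geq y$ and $x \in X_{y'}(k)$, and by Serre--Tate and Grothendieck--Messing (\cite{BBM}) reduce to constructing a lift of the Hodge filtration $\omega_1 \subseteq \omega_2 \subseteq \omega \subseteq \mathcal{E}$ to $\mathcal{E}_R = \mathcal{E}\otimes_k R$ with $R = k[[X]]$, whose generic fibre has PR type $y$ and which is compatible with the polarization in the sense that $\omega_R$ is totally isotropic, $\omega_{1,R}^\perp = T^{-2}\omega_{1,R}$ and $\omega_{2,R}^\perp = T^{-1}\omega_{2,R}$. The construction would follow the same three-step inductive scheme as in the unitary proof: first lift $\omega_{1,R}$ inside $\mathcal{E}_R[T]$; then lift $\omega_{2,R}$ inside $T^{-1}\omega_{1,R}$ with the prescribed generic intersection dimension $\alpha_1$ against $\mathcal{E}_R[T]$; finally lift $\omega_R$ inside $T^{-1}\omega_{2,R}$ with prescribed intersection dimensions $\delta_1 + \alpha_2$, $\beta_1 + d_2$ and $\delta_1 + \delta_2$ against the auxiliary spaces $\mathcal{F}, T^{-1}\omega_{1,R}, \mathcal{G}$ defined exactly as before. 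The numerical feasibility inequalities are unchanged and follow from $y' \geq y$ together with the admissibility of $y$.

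The genuinely new ingredient is that each successive lift must respect the polarization. For this I would formulate a symplectic analogue of Lemma \ref{lemmfilt}: inside a symplectic $R$-module $M$ equipped with a flag of polarization-compatible direct summands, an isotropic direct summand $\overline{L} \subseteq \overline{M}$ admits an isotropic lift $L \subseteq M$ with any prescribed intersection dimensions $d_i'$ allowed by Lemma \ref{lemmfilt}. The proof would run the same inductive construction as in Lemma \ref{lemmfilt}, but with the free auxiliary vectors $v_i, w_i$ now chosen inside an appropriate isotropic subspace; the feasibility relies on the fact that the pairings induced on the quotients $T^{-1}\omega_{1,R}/\omega_{1,R}$ and $T^{-1}\omega_{2,R}/\omega_{2,R}$ are perfect and alternating, so that the isotropy constraints cut out linear subspaces of sufficient dimension. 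The condition $y \in Y^{pol}$, equivalent to $P_1 = P(r, g, 2g-r)$ with the corresponding symmetry on $P_2, P_3$, is exactly what ensures this sufficient dimension.

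The main obstacle is the symplectic lemma itself: one must track carefully how the alternating form interacts with the $T$-action and with the auxiliary flags $\mathcal{F}, \mathcal{G}$, and verify that at each inductive step the isotropy condition and the prescribed intersection conditions are simultaneously realizable. Once this is established, the unitary argument carries over verbatim, producing an $R$-lift of the PR datum whose generic fibre lies in $X_y(k((X)))$ and whose special fibre is $x$. This shows $x \in \overline{X_y}$ and completes the proof.
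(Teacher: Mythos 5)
Your proposal is correct and follows essentially the same route as the paper: semi-continuity for one inclusion, then Serre--Tate and Grothendieck--Messing to reduce to lifting the filtration over $k[[X]]$ in three steps with the same prescribed generic intersection dimensions, with the polarization handled by a totally isotropic variant of Lemma \ref{lemmfilt} for a self-dual flag --- which is exactly the lemma the paper states and proves right after the theorem. No substantive differences.
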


\begin{proof}
Since the Hodge polygon goes up by specialization, one has the inclusion 
$$ \overline{X_y} \subseteq \coprod_{y' \geq y} X_{y'} $$
Now, let $y' \geq y$ and $x \in X_{y'} (k)$. We want to prove that there exist a deformation of $x$ which is in $X_y$. \\
Let us write $y' = (P_1', P_2', P_3')$, with $P_1' = P(\delta_1', g, 2g - \delta_1')$, $P_2 = P(\alpha_1', \alpha_2')$, $P_3 = P(\beta_1', \beta_2')$ ($2g \geq \delta_1' \geq g$, $\alpha_1' \geq \alpha_2'$, $\beta_1' \geq \beta_2'$), and similarly $y = (P_1, P_2, P_3)$. By Serre-Tate and Grothendieck-Messing, it is enough to lift the Hodge filtration. Let $R = k[[X]]$, and let $\mathcal{E}_R := \mathcal{E} \otimes R$. One lifts $\omega$ to  direct summand $\omega_R \subseteq \mathcal{E}_R$, totally isotropic, such that $\omega_R \otimes_R k((X))$ will be a PR datum of type $y$. \\
One lifts successively the filtration $0 \subseteq \omega_1 \subseteq \omega_2 \subseteq \omega$. First, one lifts $\omega_1$ to $\omega_{1, R}$ a free $R$-module of rank $g$ inside $\mathcal{E}_R [T]$, totally isotropic for the induced pairing. Then one considers the filtration
$$ \omega_{1,R} \subseteq \mathcal{E}_R [T] \subseteq T^{-1} \omega_{1,R}$$
One lifts $\omega_{2,R}$ inside $T^{-1} \omega_{1,R}$, such that it contains $ \omega_{1,R}$, and the intersection with $\mathcal{E}_R [T]$ has dimension $\alpha_1$ in generic fiber. One also requires that $\omega_{2,R} / \omega_{1,R}$ is totally isotropic for the induced pairing on $T^{-1} \omega_{1,R} / \omega_{1,R}$. This is possible since the intersection of $\mathcal{E} [T] $ and $\omega_2$ has dimension $\alpha_1' \geq \alpha_1$. Let $\mathcal{F}_{k((X))} := 
\mathcal{E}_{k((X))} [T] + (\omega_{2,R} \otimes_R k((X)))$, and $\mathcal{G}_{k((X))} := \mathcal{E}_{k((X))} [T^2] \cap (T^{-1} (\omega_{2,R} \otimes_R k((X))))$. These are $k((X))$ vector spaces of dimension $2g+ \alpha_2$ and $2g + \alpha_1$ respectively, and they are the orthogonal of one another. Let $\mathcal{F} := \mathcal{F}_{k((X))} \cap \mathcal{E}_R$, and $\mathcal{G} := \mathcal{G}_{k((X))} \cap \mathcal{E}_R$. We will assume that the lift of $\omega_2$ is done in such a way that the intersection of $\omega$ with the reduction of $\mathcal{F}$ and $\mathcal{G}$ have maximal dimension. These dimensions can be made equal to 
$$ \min (\delta_1' + \alpha_2, \beta_1' + g) \qquad \min (\delta_1' + g, \alpha_1 + \beta_1')$$
respectively. One then considers the filtration
$$\omega_{2,R} \subseteq \mathcal{F} \subseteq T^{-1} \omega_{1,R} \subseteq \mathcal{G} \subseteq T^{-1} \omega_{2,R}$$
One lifts $\omega$ to $\omega_R$ inside $T^{-1} \omega_{2,R}$ and containing $\omega_{2,R}$ such that the intersection with $\mathcal{F}$, $T^{-1} \omega_{1,R}, \mathcal{G}$ have dimension $\delta_1 + \alpha_2, \beta_1 + g, \delta_1 + g$ respectively in generic fiber. This is indeed possible since one has
$$\delta_1 + \alpha_2 \leq \min (\delta_1' + \alpha_2, \beta_1' + g)$$
$$ \beta_1 + g \leq \beta_1'+g$$
$$\delta_1 + g \leq \min (\delta_1' + g, \alpha_1 + \beta_1')$$
One also requires that $\omega_R / \omega_{2,R}$ is totally isotropic for the induced pairing on $ T^{-1} \omega_{2,R} / \omega_{2,R}$. All of this is achieved with the following lemma.
\end{proof}

\begin{lemm}
Let $R = k[[X]]$, and consider $M$ a free $R$ module of rank $2g$ with a perfect pairing. Assume that one has direct summands
$$0 \subseteq M_1 \subseteq M_2 \subseteq \dots \subseteq M_l = M$$
such that $M_i$ is free of rank $h_i$, with $M_i^\bot = M_{l-i}$. Let $\overline{M_i} :=M_i \otimes_R k$, and let $\overline{L}$ be a maximal totally isotropic vector subspace of $\overline{M}$, and let $d_i := \dim \overline{L} \cap \overline{M_i}$.  One has automatically $0 \leq d_{i+1} - d_i \leq h_{i+1} - h_i$, $h_{l-i} =2g - h_i$ and $d_{l-i} =g - h_i + d_i$, $d_l =g$. \\
Let $d_1', \dots d_l'$ be integers, with $d_l' = g$, $0 \leq d_{i+1}' - d_i' \leq h_{i+1} - h_i$, $d_{l-i}' = g -h_i +d_i'$ and $d_i' \leq d_i$ for $1 \leq i \leq l-1$. Then there exist a lift $L \subseteq M$ of $\overline{L}$, which is a totally isotropic direct summand, such that the intersection of $L$ with $M_i$ has dimension $d_i'$ in generic fiber. 
\end{lemm}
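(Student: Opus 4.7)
The plan is to mimic the inductive basis construction of Lemma \ref{lemmfilt}, producing a basis $f_1, \ldots, f_g$ for the lift $L$ stratum by stratum, while at each stage ensuring that the partial basis is totally isotropic. The key structural observation is that the duality $M_i^\bot = M_{l-i}$ forces $M_i$ to be totally isotropic whenever $2i \leq l$, and the numerical constraint $d_{l-i}' = g - h_i + d_i'$ is precisely the equality case of the general inequality $d_{l-i}' \geq g - h_i + d_i'$ that any maximal totally isotropic lift is forced to satisfy (by comparing $\dim(L + M_{l-i})$ with $\dim(L \cap M_i)^\bot$). So the input data is already arranged to be compatible with isotropy.

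First, I would construct basis vectors $f_1, \ldots, f_{d_{\lfloor l/2 \rfloor}'}$ lying in $M_{\lfloor l/2 \rfloor}$, proceeding exactly as in the proof of Lemma \ref{lemmfilt}, with perturbations of the form $X v_j$ introduced as needed in order to drop generic intersection dimensions from $d_j$ down to $d_j'$. Since $M_{\lfloor l/2 \rfloor} \subseteq M_{\lfloor l/2 \rfloor}^\bot$ is totally isotropic, these vectors are automatically pairwise orthogonal, and no adjustment of the $v_j$ is required for the lower half of the filtration.

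Next, for each $i > l/2$, I would extend by vectors $f_{d_{i-1}'+1}, \ldots, f_{d_i'}$ lying in $M_i$, again using the mechanism of Lemma \ref{lemmfilt}. Orthogonality of these new vectors against all previously constructed $f_j$ with $j \leq d_{l-i}'$ is automatic since those $f_j$ already lie in $M_{l-i} = M_i^\bot$. What remains is to force pairwise orthogonality among the vectors built at strata $l-i < j \leq i$. For this one exploits the freedom in the choice of the perturbations $X v$, $X w$ appearing in Lemma \ref{lemmfilt}: since $\overline{L}$ is isotropic, the inner products $\langle f_i, f_j\rangle$ of naive lifts automatically lie in $X R$, and dividing by $X$ yields a system of linear equations modulo $X$ on the reductions $\bar v_j$. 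The fact that the pairing induces a perfect duality between $M_j/M_{j-1}$ and $M_{l-j+1}/M_{l-j}$ makes this system solvable; iterating the correction order by order in $X$, or equivalently invoking Hensel's lemma on the scheme of isotropic lifts, produces the desired $L$.

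The main obstacle is this final isotropy-correction step: one must verify that the corrections neither disturb the prescribed generic intersection dimensions $d_i'$ nor destroy the property that $L$ is a direct summand. The first point is preserved because the corrections take the form $X \cdot (\text{vector in } M_j)$, so they remain inside the appropriate $M_j$; the second follows from the fact that the reduction of $L$ modulo $X$ is unchanged by these corrections, so $L \otimes_R k = \overline{L}$ still admits a chosen complement in $\overline{M}$, which lifts to a complement of $L$ in $M$ by Nakayama.
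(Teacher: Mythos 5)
Your overall strategy matches the paper's: both reduce to the lower half $M_1\subseteq\dots\subseteq M_{\lfloor l/2\rfloor}$ of the filtration, where the $M_i$ are totally isotropic, and use the duality $M_i^\bot=M_{l-i}$ together with the numerical constraint $d'_{l-i}=g-h_i+d'_i$ to control the upper half. The mechanisms differ, though: the paper enforces isotropy by construction, confining each successive lifting step to the orthogonal complement of the partial lift already built (first $L_1\subseteq M_1$, then everything else inside $L_1^\bot$, and so on), whereas you lift naively and then correct order by order in $X$ by solving a linear system whose solvability comes from the perfect duality between $M_j/M_{j-1}$ and $M_{l-j+1}/M_{l-j}$. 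Your route is viable and arguably makes the role of the duality more explicit, but it obliges you to check that the corrections can be chosen without disturbing the prescribed generic intersection dimensions --- a genericity condition interacting with a linear one --- which the paper's ``work inside $L^\bot$'' device sidesteps.

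There is one concrete over-claim. You assert that the vectors produced for the lower half ``lie in $M_{\lfloor l/2\rfloor}$'' and are therefore automatically pairwise orthogonal, with no adjustment of the $v_j$ required. In the mechanism of Lemma \ref{lemmfilt}, at stage $k=\lfloor l/2\rfloor$ there remain $d_k-d'_k$ undetermined perturbation vectors, and these are necessarily assigned in strata \emph{above} $k$ (that is precisely how the generic intersection with $M_k$ is made to drop to $d'_k$). Hence among the lifts of a basis of $\overline{L}\cap\overline{M_k}$ there are vectors of the form $e+Xv$ with $v\notin M_{l-k}=M_k^\bot$, and the pairing $\langle e_a+Xv_a,\,e_b+Xv_b\rangle$ of two such vectors is only in $XR$, not zero. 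These cross-terms must also be fed into your correction system (or avoided altogether, as in the paper, by restricting all subsequent choices to $L_1^\bot$, $L_2^\bot$, etc.). This is fixable with the machinery you already set up, but as written the lower-half step has a gap.
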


\begin{proof}
This is a variant of the proof of the lemma \ref{lemmfilt} from the previous section. Let $k$ be the integer part of $l/2$. It is enough to consider the modules $M_1, \dots, M_k$, which are moreover totally isotropic. One then applies process described in the previous section. One constructs a module $L_1$ of rank $d_1'$, such that $L_1$ is included in $M_1$, and the reduction of $L_1$ is in $\overline{L}$. One has tried to look for a lift of $\overline{L} \cap \overline{M_1}$, but there are $d_1 - d_1'$ vectors yet to determine. One then considers the module $L_1 ^\bot$, and will lift all the vectors inside this module. We then consider the vector space $\overline{M_2} \cap \overline{L}$. One will look for a (partial) lift $L_2$ of rank $d_2'$, which will be totally isotropic. One then repeats this process to construct the lift $L$ of $\overline{L}$. 
\end{proof}

\bibliographystyle{amsalpha}

\begin{thebibliography}{99}

\bibitem[AG]{A-G} F. Andreatta and E. Goren,
\emph{Geometry of Hilbert modular varieties over totally ramified primes},
Inter. Math. Res. Notices 33 (2003), 1785-1835.

\bibitem[BBM]{BBM} P. Berthelot, L. Breen and W. Messing,
\emph{Th\'eorie de Dieudonn\'e cristalline II},
volume 930 of \textit{Lectures Notes in Mathematics}, Springer-Verlag, Berlin, 1982.

\bibitem[BH1]{BH_ram} S. Bijakowski and V. Hernandez,
\emph{Groupes $p$-divisibles avec condition de Pappas-Rapoport et invariants de Hasse}
Journal de l'École polytechnique-Mathématiques, Tome 4 (2017), pp. 935-972.

\bibitem[BH2]{BH_PR} S. Bijakowski and V. Hernandez,
\emph{On the geometry of the Pappas-Rapoport models for PEL Shimura varieties},
to appear in Journal of the Institute of Mathematics of Jussieu.

\bibitem[Ka]{Ka} N. Katz,
\emph{Slope filtrations of $F$-crystals},
Ast{\'e}risque, 63:113-164 (1979).

\bibitem[Oo]{Oo} F. Oort,
\emph{A stratification of a moduli space of abelian varieties},
in Moduli of Abelian Varieties (Texel Island, 1999), 345-416, Progr. Math. 195, Birkhäuser, Basel, 2001.

\bibitem[PR1]{P-R} G. Pappas and M. Rapoport,
\emph{Local models in the ramified case I. The EL case},
J. Algebraic Geom., 12(1) (2003), 107--145.

\bibitem[PR2]{P-R2} G. Pappas and M. Rapoport,
\emph{Local models in the ramified case II. Splitting models},
Duke Math. J. 127 (2) (2005), 193-250.

\bibitem[Sp]{Sp} N. Spaltenstein,
\emph{The fixed point set of a unipotent transformation on the flag manifold},
Nederl. Akad. Wet., Proc., Ser. A 79, 452-456 (1976). 

\bibitem[VW]{VW} E. Viehmann and T. Wedhorn,
\emph{Ekedahl-Oort and Newton strata for Shimura varieties of PEL type},
Math. Ann. 356 (2013), 1493–1550.

\end{thebibliography}

\end{document}